\newcommand{\removelatexerror}{\let\@latex@error\@gobble}
\newtheorem{Theorem}{Theorem}
\theoremstyle{definition}
\title{Sample-based Population Observers}
\author{Shen Zeng
\thanks{Shen Zeng is with the Department of Electrical and Systems Engineering at Washington University in St.\ Louis, Email: \texttt{s.zeng@wustl.edu}}
}
\begin{document}

\maketitle

\begin{abstract}
In this paper, a first sample-based formulation of the recently considered population observers, or ensemble observers,
which estimate the state distribution of dynamic populations from measurements of the output distribution is established. The results presented in this paper yield readily applicable computational procedures that are no longer subject to the curse of dimensionality, which all previously developed techniques employing a kernel-based approach are inherently suffering from. The novel insights that eventually pave the way for all different kinds of sample-based considerations are in fact deeply rooted in the basic probabilistic framework underlying the problem, bridging optimal mass transport problems defined on the level of distributions with actual randomized strategies operating on the level of individual points. The conceptual insights established in this paper not only yield insight into the underlying mechanisms of sample-based ensemble observers but significantly advance our understanding of estimation and tracking problems for the class of ensembles of dynamical systems in general. 

\begin{IEEEkeywords}
Observers, Large-scale systems, Nonlinear dynamical systems, Computed tomography
\end{IEEEkeywords}

\end{abstract}

\section{Introduction}
\IEEEPARstart{T}{he} observability problem in systems theory systematically addresses a task fundamental to numerous scientific fields, particularly those close to physics, namely the extraction of information about the state of a dynamical process from knowledge of the underlying dynamics, and time series data of some less informative output measurement. The concept of observability together with the concept of controllability of a linear state-space model layed the basic foundation of a general theory of (control) systems (see \cite{Kalman1959_general_theory, Kalman1963_mathematical_linear_systems}), which has fundamentally reshaped the way we think about systems.
Out of this quite abstract approach, virtually as a side product of the deep systems theoretic undertakings of Kalman, the celebrated Kalman filter \cite{kalman1960filter} was born\footnote{Kalman himself had described his discovery a mere corollary of his much more encompassing state-space approach on different occasions.}, which since then has been a key enabling device for several important applications. 

The same line of thoughts centered around the questions of controllability and observability are recently being investigated in relation to a new class of systems, consisting of populations of dynamical systems of the same structure with a given distribution in their states \cite{brockett2000stochastic, brockett2007optimal, li2009ensemble, li2011ensemble, brockett2012notes, zeng2015tac, zeng2017sampled}. While a classical system can be thought of as a single point particle evolving in state-space (following the combined effect of a drift and a control vector field), for a population comprised of a large number of dynamical system, the point describing the state of the system would be replaced by a (probability) distribution of points, as suggested in Figure~\ref{fig:vector_field_density}.

\newpage
\begin{figure}[htp!]
	\centering
\vspace{0.15cm}
	\includegraphics[width=0.17\textwidth]{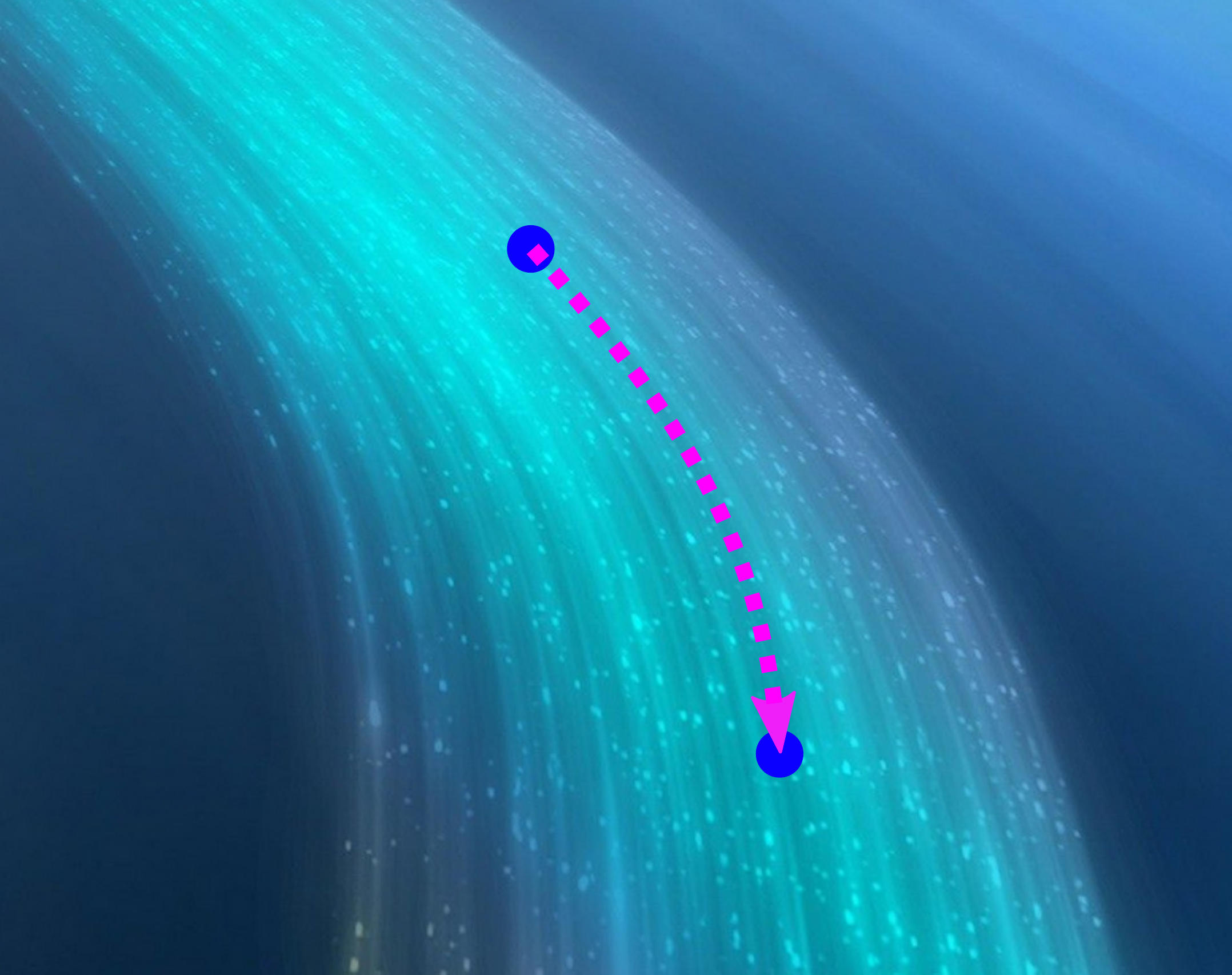} \;\;\;\;\;\; \;\;\;\; \includegraphics[width=0.17\textwidth]{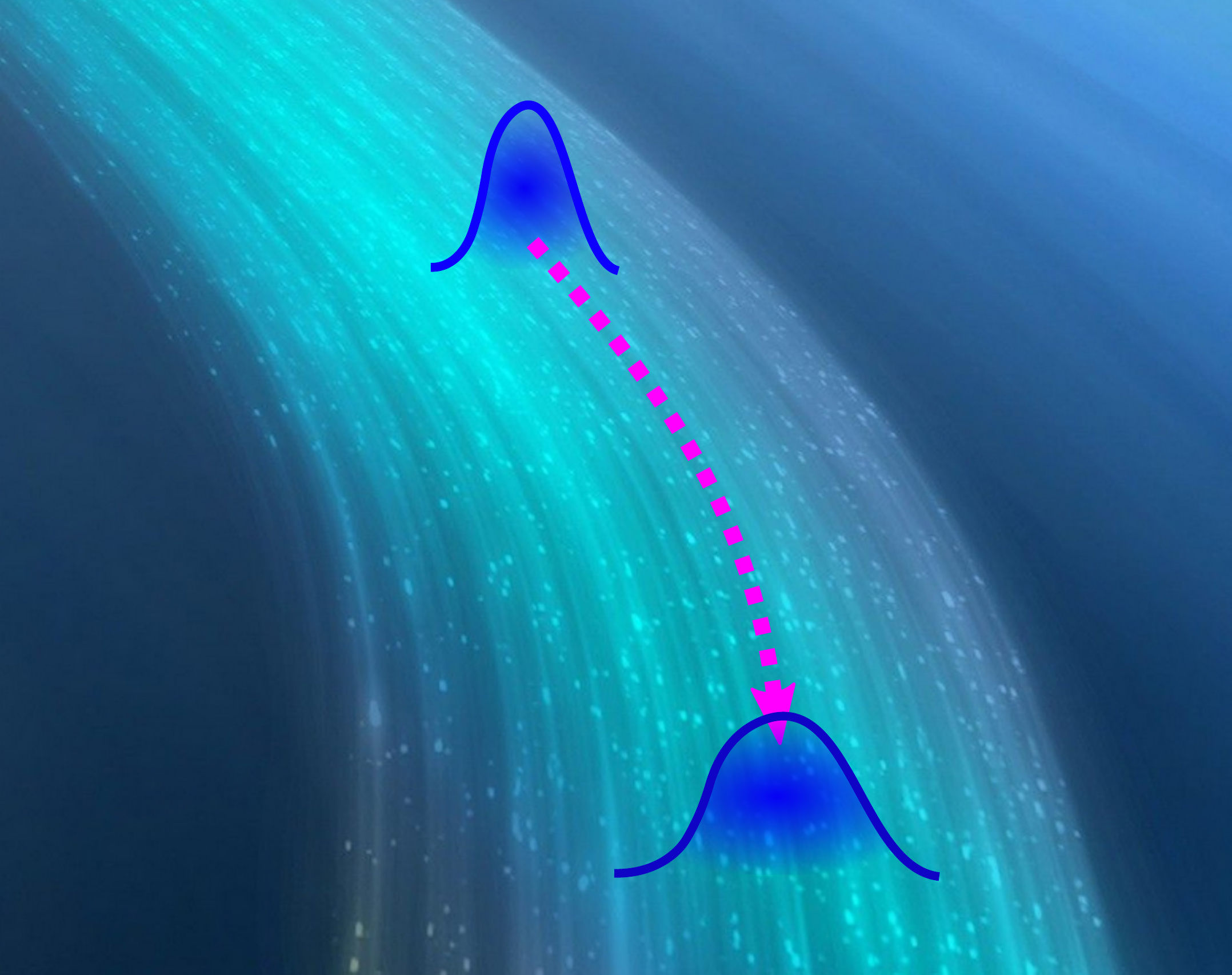}
\vspace{-0.1cm}
	\caption{The evolution of the state of a classical system is typically thought of as a point evolving in state space (left). In the same spirit, the dynamics of a population of systems is described by distributions of points (right).}
	\label{fig:vector_field_density}
\end{figure}

Of course, the idea of considering probability distributions as a description of the state of a system is not new -- in fact it traces back more than 100 years to the early beginnings of statistical mechanics, where the occurring probability distribution was already used both as a model for the state of one uncertain system or of an actual population of many systems, with a distribution in initial states. However, it has only recently become clear that once we look closer at the interface of really interacting with actual populations of systems, very distinct restrictions start to surface. This is where the probabilistic model splits into two branches, each with completely different interpretations with regard to what is being measured, and how we are able to exert control over the system.

A prime example that illustrates the fundamentally different interpration of the probabilistic setup for the situation of populations of dynamical systems is given by heterogeneous cell populations, such as cancer cell populations. For example, an important task for such heterogeneous cell populations is to estimate the specific distribution in states\footnote{The state of a single cell is typically described by the set of concentrations of different molecules or proteins, which are governed by regulatory networks that in turn can be described by ordinary differential equations.} or parameters, as such distribution can often be the key driver for heterogeneous responses to an external biochemical stimulus, like it is prominently observed with cancer, where we often see the survival of subpopulations during drug treatment. The given data for solving the estimation task are measurements of only a subset of molecule concentrations, which furthermore are increasingly being recorded via high-throughput devices called flow cytometers. By rapidly passing a stream of fluorescently labeled cells through a laser and fluorescence detectors, flow cytometers can easily gather concentration measurements of a vast number of cells. However, the ability to gather vast amounts of data comes at a cost. Namely, it is only possible to measure at the population level, which here, specifically, means that nothing can be said about an individual cell; it is only that a lot of measurements are being recorded and then stored in the form of histograms or other statistics. This circumstance may be described as a population-level observation, and is illustrated Figure~\ref{fig:aggregate_data}.

\begin{figure}[htpb]
\vspace{-0.3cm}
 \centering
    \includegraphics[width=0.43 \textwidth]{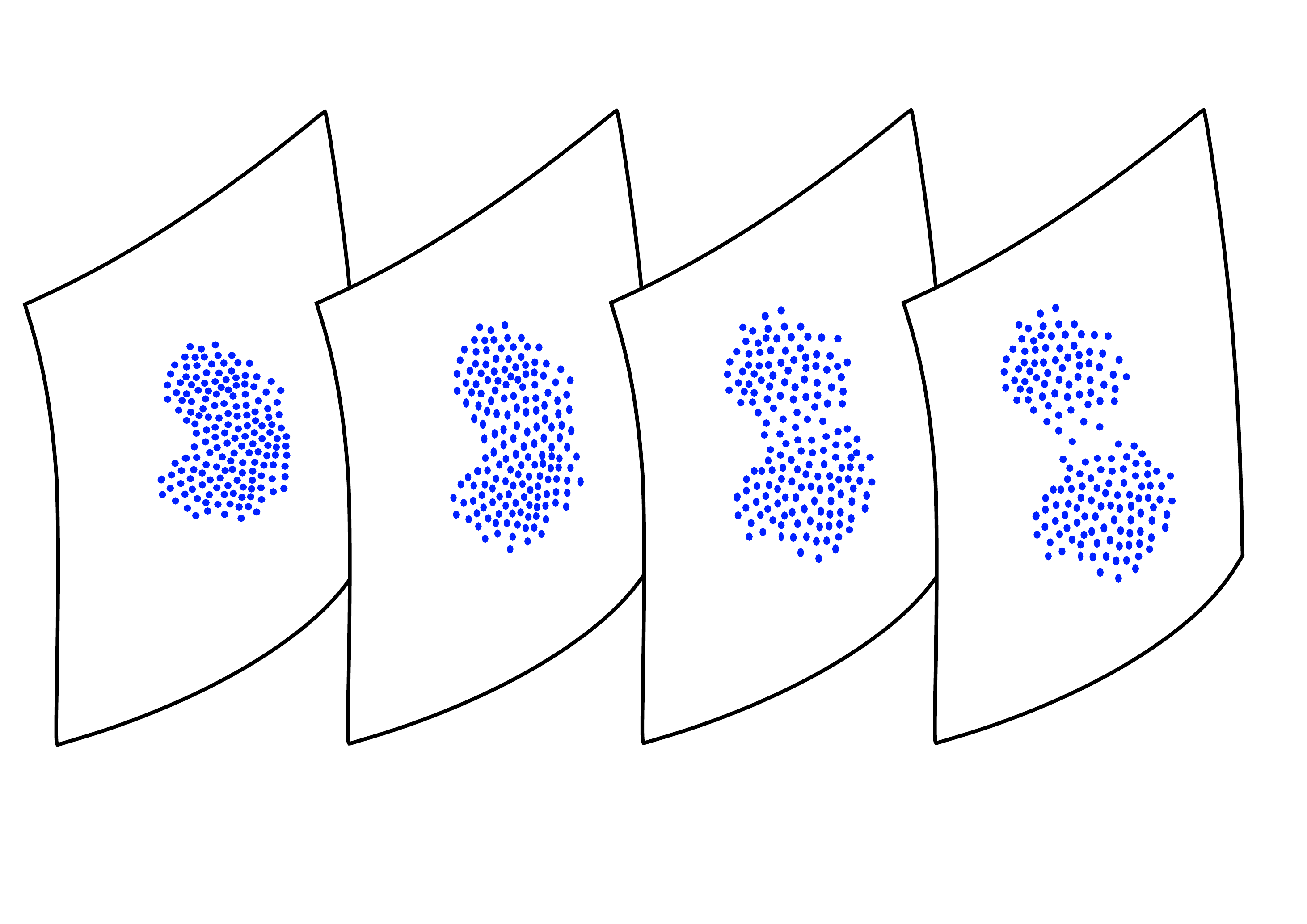} 
	\put(-222,112){\rotatebox{34.3}{Output space}}
	\put(-192,16){\rotatebox{34.3}{$t=t_1$}}
	\put(-142,16){\rotatebox{34.3}{$t=t_2$}}
	\put(-92,16){\rotatebox{34.3}{$t=t_3$}}
	\put(-40,16){\rotatebox{34.3}{$t=t_4$}}
	\vspace{-0.55cm}
      \caption{An illustration of population snapshots. In each time step $t_1, \dots, t_4$ we have a snapshot of certain output values of a population. The crucial point is that in a snapshot, information relating an output value to the individual producing that output value is completely missing. Taken from \cite{zeng2015tac}.}
\label{fig:aggregate_data}
\end{figure}

While Figure~\ref{fig:aggregate_data} may give the impression that one is measuring many output trajectories of individual cells, but without recording the actual associations between measurements in different time points, the situation is in fact even more cumbersome for the example of cell populations. This is because we only get to measure each cell once, due to the simple reason that after it is measured, it is either destroyed or gone. Therefore, the measurements at different snapshots may stem from completely different individuals in the population; they do, however, all stem from the same population. 

These considerations led us to view these population snapshots as samples from an output distribution, and to further view the output distribution as the ``total'' output of the population. This idea was then formalized in terms of a novel systems theoretic setup in which a classical system with output
\begin{align*}
\dot{x} &= f(x) \\
      y &= h(x),
\end{align*}
with $f: \mathbb R^n \to \mathbb R^n$ and $h: \mathbb R^n \to \mathbb R^m$, is generalized by means of a distribution in initial states. More specifically, the distribution of initial states in the population is modelled by a probability distribution, i.e. the initial state is taken to be a random vector $x(0) \sim \mathbb P_0$, with a non-parametric probability distribution $\mathbb P_0$. This clearly leads to a probabilistic nature of the output as well, which we describe in terms of $y(t) \sim \mathbb P_{y(t)}$. 

The practical ensemble observability problem consists of  reconstructing the initial state distribution $\mathbb P_0$ when given the evolution of the distribution of outputs $\mathbb P_{y(t)}$, which, again, is fundamentally different from classical filtering problems in which the measured data are \emph{single realizations} of the output distribution associated to a \emph{single uncertain} plant. In \cite{zeng2015tac}, we first studied the ensemble observability problem in the linear case, where $f(x)=Ax$ and $h(x)=Cx$, both from a theoretical and practical perspective. The investigations of the underlying basic theoretical problem in particular also revealed a deep connection between such ensemble observability problems and mathematical tomography problems, providing crucial insights into the inner systems theoretic mechanisms and, from a practical perspective, also immediately rendered the problem amenable to computational solutions. 

The computational solutions, however, having been very much anchored in the tomography-based considerations, were inevitably affected by the curse of dimensionality. While problems in tomography most prominently take place in lower dimensions, specifically dimensions two or three, in the ensemble observability setup, such a restriction is naturally undesirable, as the dimension of the state space is in general typically higher. In \cite{zeng2015nonlinear}, we already pointed out that in our quest to get better insights about the initial state distribution, we eventually want to circumvent the route over distributions, in which the output snapshots are first mapped into discretized distributions (histograms), from which a discretized initial state distribution is then to be reconstructed. Instead, a sample-based approach to the reconstruction problem was envisioned, which from a pragmatic standpoint seems very natural as well, as the measurement data is naturally given in terms of samples of the output in the first place, and not in terms of the distribution of the output, which is just a mathematical idealization introduced for the sake of studying the theoretical problem. In order to establish a sample-based framework, we need to derive a systematical procedure that takes the samples of the output distribution at different time points and eventually returns a set of points that could very well be samples from the initial state distribution. In other words, we seek for a procedure that lets us ``sample'' from the state distribution by directly utilizing the output snapshots.

From this perspective, this amounts to solving the probabilistic analogue of the tomography problem \emph{in a statistical framework}. More specifically, in this probabilistic setup, we really view the available Radon projections as marginal distributions (in a probabilistic sense) and the actual data that can be used not as the marginal distribution but rather samples from it. The question then would read as: How can we generate a set of points in $\mathbb R^n$ that best mimics a set of real samples of the sought joint distribution? Naturally, this problem formulation leads us to think of an approach in the spirit of \emph{Markov chain Monte Carlo methods}. In this paper, this idea is eventually realized by leveraging a connection to optimal mass transport problems \cite{villani2008,chen2017optimal}, which is in fact very fundamental, and leads to novel interesting theoretical insights and questions.

The structure of the paper is as follows. In Section~\ref{sec:review}, we provide a very brief review of the ensemble observability problem, with its many different viewpoints and connections to other areas of mathematics. In particular, we will discuss an example of a nonlinear observability problem, which already provides some important hints towards establishing a sample-based approach. This sample-based approach is then fully established in Section~\ref{sec:ensemble_estimator}, yielding both sample-based ensemble estimators and observers. All key steps in the introduction of the sample-based scheme are complemented by detailed illustrations and examples. In Section~\ref{sec:discrete_ensemble}, we turn towards the discrete ensemble observability problem \cite{zeng2017tac}, which is a problem closely related to the initially introduced general ensemble observability. We are able to accomplish our long-lasting effort to establish a unification of the continuous and discrete version of the ensemble observability problem, resulting in a coherent computational framework centered around the optimal mass transport formulation.

\section{The ensemble observability problem}
\label{sec:review}
In this section we provide a rapid review of different aspects of the ensemble observability problem that are most relevant to the presentation of the novel insights and results put forth of this paper. In particular, we will put significant emphasis on the discussion of the relation between the ensemble observability problem and mathematical tomography problems, established in \cite{zeng2015tac}, by which the ensemble observability problem also first became amenable to comprehensible computational solutions. 

We recall that in the general ensemble observability problem we ask under which conditions we can reconstruct the initial state distribution $\mathbb P_0$ when given the evolution of the distribution of outputs $\mathbb P_{y(t)}$, under a finite-dimensional (nonlinear) dynamical system. Furthermore, we are interested in practical reconstruction techniques for this problem. In \cite{zeng2015tac}, we first studied this problem in the linear case, both from a theoretical and practical perspective. To first build some intuition around the whole concept of ensemble observability, we consider an example with a two-dimensional harmonic oscillator
\begin{align*}
\dot{x} &= \begin{pmatrix} \phantom{-}0 & 1 \\ -1 & 0 \end{pmatrix}x, \\
y &= \begin{pmatrix} 1 & 0 \end{pmatrix} x.
\end{align*}
with a bimodal initial distribution as depicted in Figure~\ref{fig:harmonic_oscillator_setup}. The measured output distribution corresponding to the output $y = x_1$ of the underlying linear system results from a marginalization of the state distribution over the second coordinate, i.e.\ from integration along the $x_2$-direction. Thus, when the system evolves, the state distribution is subject to both a transportation with the flow, and a marginalization over the second coordinate, resulting in an evolution of the output distribution, as suggested in Figure~\ref{fig:harmonic_oscillator_setup}. 
\begin{figure}[htp!]
	\centering
	\includegraphics[width=0.49\textwidth]{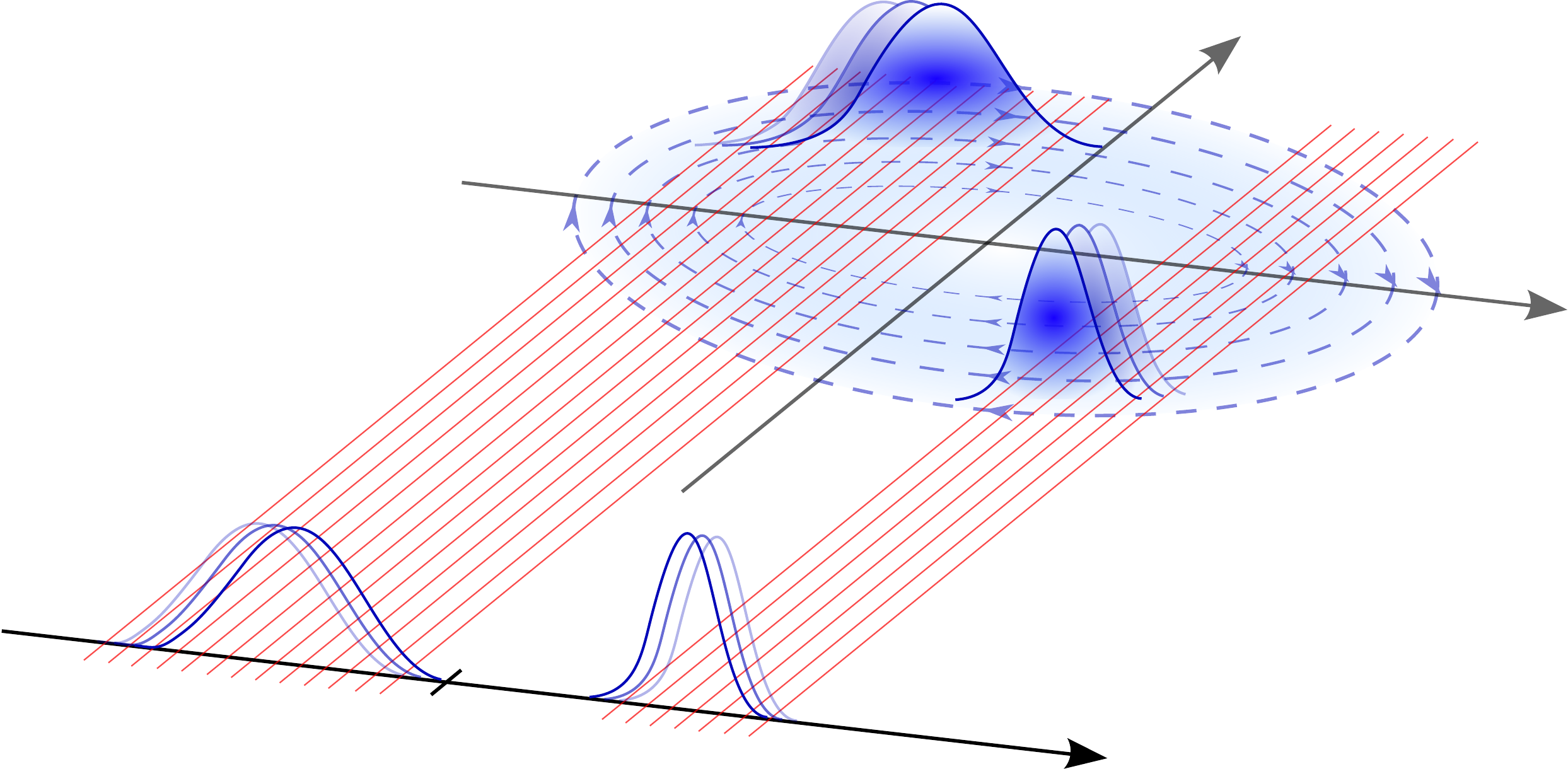}
	\caption{Illustration of the ensemble observability problem for a two-dimensional harmonic oscillator with a bimodal initial distribution. The upper right shows the evolution of the state distribution. The lower left shows the evolution of the corresponding output distribution. Taken from \cite{zeng2016diss}.}
	\label{fig:harmonic_oscillator_setup}
\end{figure}

The question in the ensemble observability problem for the specific example is thus whether or not one can reconstruct the (initial) state distribution from only observing the evolution of the output distribution, shown in the lower left of Figure~\ref{fig:harmonic_oscillator_setup}. Even though one might consider this a quite systems theoretic perspective on the problem, an answer to this problem is simply not immediate in this considered setting, which is a rather remarkable conclusion.

Due to the aforementioned reasons, in \cite{zeng2015tac} we took a different approach to the problem, which is to simply view and treat it as a (generic) inverse problem in a measure theoretic framework. In fact, the output distribution $\mathbb P_{y(t)}$ is related to the initial distribution $\mathbb P_0$ in a very basic way, namely through a pushforward relation
\vspace{-0.15cm}
\begin{align*}
\mathbb P_{y(t)} (B_y) := \mathbb P_0 ((Ce^{At})^{-1} (B_y))  =  \int_{(Ce^{At})^{-1} (B_y)} \;\, p_0(x) \; \text{d}x.
\end{align*}
The values of the output distribution are related to the initial density through these integrals over these preimages, which one can think of as a strips due linearity of $x \mapsto Ce^{At}x$, as well as the fact that the interesting cases occur only when $C$ does not have full column rank. This basic perspective may be illustrated as in Figure~\ref{fig:density_tomography}. 
\begin{figure}[htp!]
\centering
\vspace{-0.24cm}
\includegraphics[width=0.28 \textwidth]{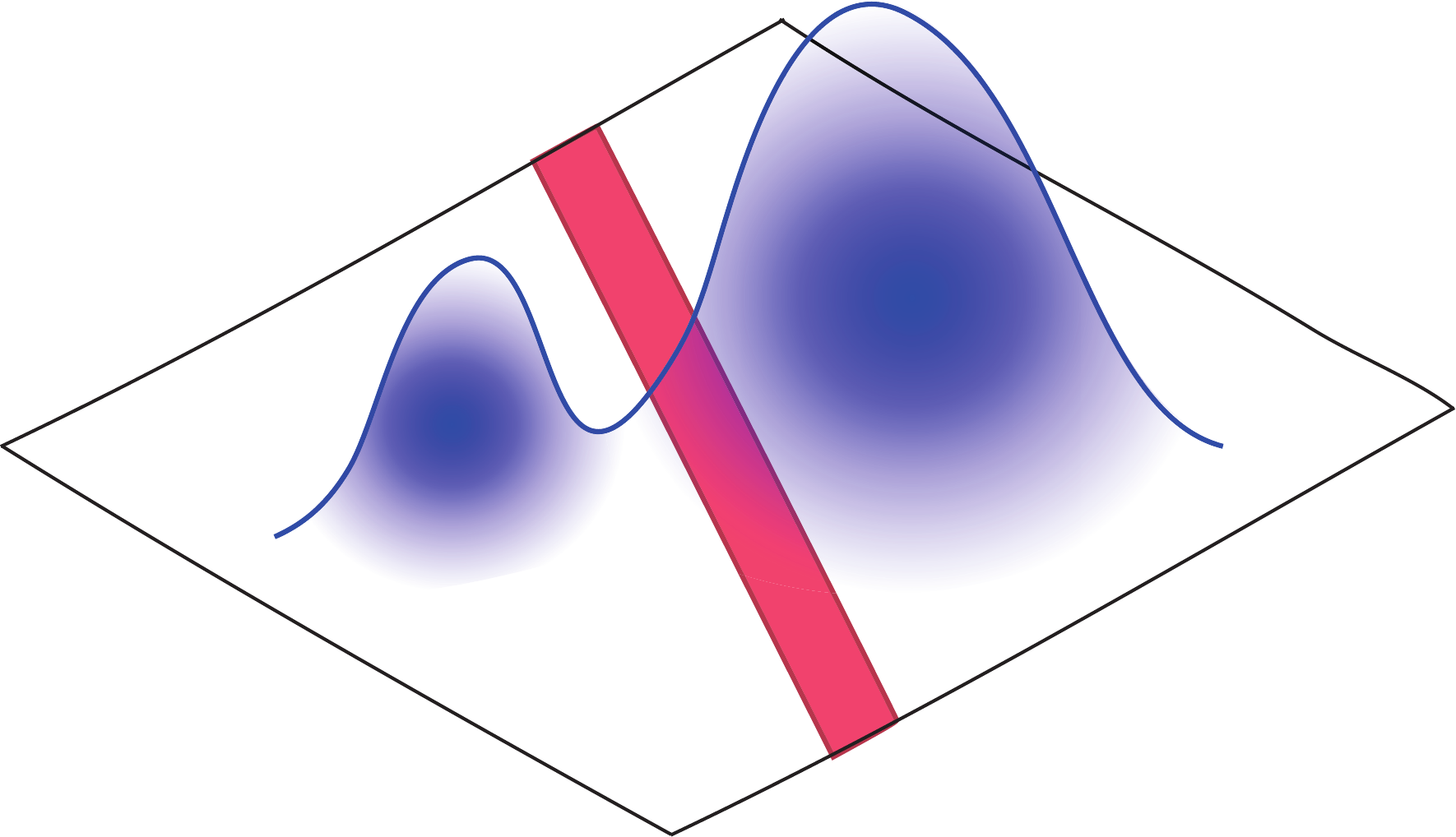}
\put(-45,87){\textcolor{blue}{initial density $p_0(x)$}}
\put(-68,-7){\textcolor{red}{$(Ce^{At})^{-1}(B_y)$}}
\vspace{0.1cm}
\caption{Illustration of the relation between initial state distribution and output distribution at a given time. The value $\mathbb P_{y(t)}(B_y)$ is equal to the strip integrals $\int_{(Ce^{At})^{-1}(B_y)} p_0(x) \, \text{d}x$. Taken from \cite{zeng2015tac}.}
\label{fig:density_tomography}
\end{figure}
 The remaining diffculty is then due to the fact that we only know the integrals over sets that stretch to infinity. Thus, for a single time point, we can never know $p_0$, since certain information about $p_0$ is simply integrated out. 

Thus, we may only hope that as time changes, the directions of the strips, dictated by $Ce^{At}$, change and that the information for different directions can be combined to infer the integrand $p_0$. This is precisely the same problem as in tomography problems, where one wants to obtain a cross-section of an object by taking radiographs from different angles. Our study of the ensemble observability problem established a direct mathematical connection between (ensemble) observability and tomography problems. In fact, the analogy is rather evident on a conceptual level, because both problems are well known to be about inferring internal information from external measurements, which in systems theory typically refer to the internal state and the external output, respectively, and in tomography refer to an internal structure of a body and radiographs, respectively.

Thus, in addition to the original, dynamic viewpoint, there is this second viewpoint associated to the ensemble observability problem in which we do not consider the evolution of the initial state distribution with the flow, but instead, the evolution of the ``measurement directions'', which are dictated by $\ker Ce^{At}$. For the example of the harmonic oscillator, the directions at which we take projections of the initial state distribution, rotate in a uniform counter-clockwise motion, which is in fact the canonical example of a tomography problem, by which the reconstructability of the ensemble observability problem for the harmonic oscillator becomes very clear. Figure~\ref{fig:harmonic_oscillator_duality} illustrates the duality between the two different viewpoints.

\begin{figure}[htp!]
	\centering
	\includegraphics[width=0.43\textwidth]{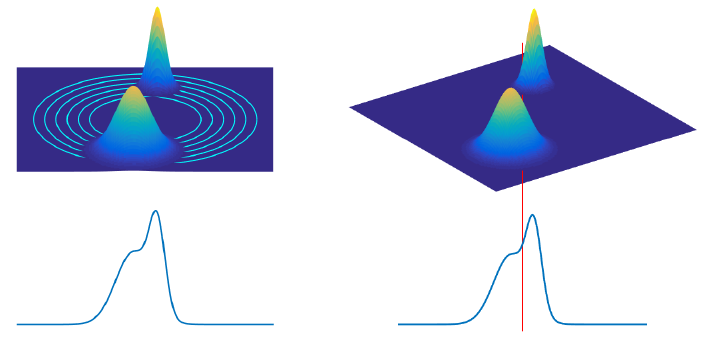}
\vspace{-0.2cm}
	\caption{Left: The distribution evolves with the flow, undergoing a rotation about the origin, and the measurement direction is fixed. Right: The distribution is held fix and we, as a (physical) observer, rotate around the object with our focus fixed on the center of the object. The observed densities are exactly the same in the two different setups.}
	\label{fig:harmonic_oscillator_duality}
\end{figure}

The quite unexpected connection to tomography that was revealed in our investigation of the theoretical problem was effectively leveraged both for theoretical studies, as well as practical reconstruction schemes. In the former, the probabilistic analogue of the projection slice theorem, the Cram\'{e}r-Wold theorem (see Section~\ref{sec:ensemble_estimator}), yielded insightful algebraic geometric conditions for ensemble observability. In the latter, the Algebraic Reconstruction Technique from computed tomography provided a reconstruction method, which, unlike the previous approaches that treated the dynamic aspect more as a black box that is used only for forward simulation purposes, was anchored in a detailed systems theoretic analysis of the underlying problem; comparative studies in light of the new tomography-based viewpoint revealed significant weaknesses of the previous kernel-based reconstruction methods. The curse of dimensionality, however, was also not resolved in this new approach, so that it became apparent that  a purely sample-based approach had to be derived.

To progress towards a sample-based viewpoint, we note that a first observation hinting in this direction can in fact be extracted from the study of an earlier considered nonlinear system, which served as an insightful example for understanding the mechanisms of the ensemble observability problem in the nonlinear case. This is a simple nonlinear oscillator
\begin{align}
\begin{split}
   \dot{x}_1 &= x_2, \\
   \dot{x}_2 &= -4 x_1 + x_1^2, \\
\end{split}
\label{eq:brocket_system}
\end{align}
with output $y=x_1$. As the initial state distribution, we again consider a bimodal distribution, as illustrated on the top left plot in Figure~\ref{fig:nonlinear_ensemble_observability}.
\begin{figure}[htp!]
	\centering \hspace{-0.26cm}
	\includegraphics[width=0.49\textwidth,trim=2.3cm 0cm 2.3cm 0cm,clip]{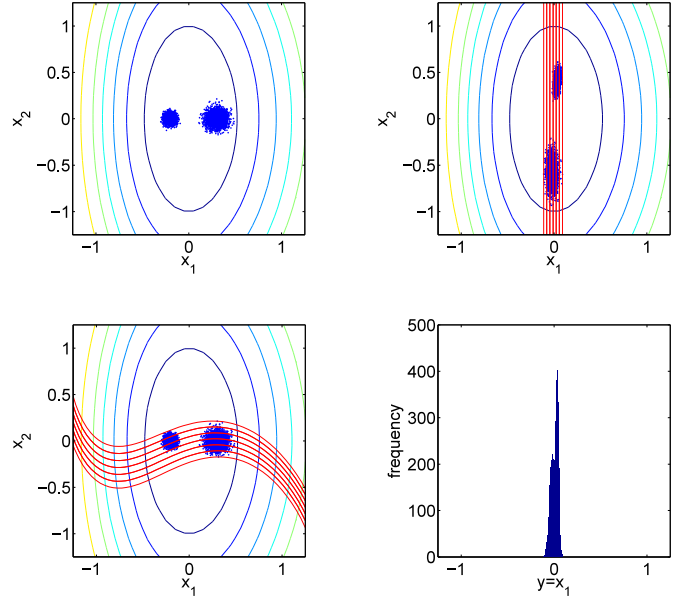}
\caption{Top left: The actual initial state distribution and the phase portrait of the nonlinear oscillator. Top right: The propagated state distribution at a specific time, as well as an indication of the level sets of the output measurement. Bottom right: The histogram corresponding to the measurement of the output distribution at the specific time point associated to the top right figure. Bottom left: The result of running the same system backwards in time, bringing the propagated state distribution to the original initial state distribution and curving the initially straight vertical level sets in the process.}
\label{fig:nonlinear_ensemble_observability}
\end{figure}
The system evolves, and at some time point will be subject to measurement, providing an output snapshot of the ensemble, or, in most practical scenarios, a histogram, as shown in the bottom right plot of Figure~\ref{fig:nonlinear_ensemble_observability}. It is almost a triviality to see that the number of samples in a given bin in the histogram is the same as the number of samples squeezed between the two lines corresponding to the boundary of that bin. Now to relate this measured information to the initial state distribution that we are actually interested in, the idea is to apply the reverse flow to the whole content of the upper right plot in Figure~\ref{fig:nonlinear_ensemble_observability}, i.e.\ the propagated state distribution and the red lines. This will bring back the state distribution to its initial position and -- this perhaps being the more interesting part -- curve the previously straight vertical level sets. 

Now it should be intuitively clear, that throughout the application of the reverse flow, the number of particles between two lines will stay the same; there will be no ``crossing'' in state-space whatsoever, which is being guaranteed by one of the basic properties of a flow. This insight paired with the aforementioned triviality regarding the measured histograms now is the key to sample-based methods.

Before we proceed with discussing the purely sample-based perspective, we note that the continuous limit of these simple observations is in fact a very intuitive way of viewing the measure-preserving property that the flow defined for densities admits. This measure-preservation is also often referred to as a continuity in the physical community and can be written as
\begin{align*}
 \int_{(h \circ \Phi_t)^{-1}(B_y)} p_0(x) \, \text{d}x = \mathbb P_{y(t)}(B_y).
\end{align*}
This is a generalization of the result in the linear case, where only $x \mapsto Ce^{At}x$ being replaced by the nonlinear forward mapping $x \mapsto (h\circ\Phi_t)(x)$; the general continuity principle remains valid in the nonlinear regime. Now to this pushforward equation, one can again associate a tomography problem, a nonlinear one, in which the ``scanning geometry'', i.e. the shape of the curved strips we are integrating over, is determined by nothing other than the interplay between the level sets of the output mapping and the flow of the vector field, which, from a systems theoretic perspective, is one particular aspect of the observability problem that makes it so interesting. As we saw, this very geometric viewpoint on the observability problem is particularly accentuated in light of the framework of ensembles with a distribution in states.

\section{Formulating the ensemble state estimator}
\label{sec:ensemble_estimator}
So far, we have articulated the need to consider a new type of approach in the computational ensemble observability problem, in which the sought state distribution is to be reconstructed by means of finitely many samples of it. The key problem in establishing this is to find suitable update and correction rules for the individual observer states so that the ensemble of observer states eventually converges to a configuration that is very likely to be a set of samples from the unknown distribution.  As discussed in the introduction, this will be done in a manner similar to Markov chain Monte Carlo methods, such as the Metropolis-Hastings algorithm \cite{hastings1970monte}, though in this paper, instead of steadily generating sample points based on proposal and acceptance rules, we start out with a fixed number of particles and perform (randomized) actions on the $N$ particles to arrive at a final configuration that is to approximate the initial state distribution. From a broader point of view, this amounts to a first solution of the statistical analogue of the classical tomography problem.

It turns out that the presented sample-based derivation of the nonlinear pushforward equation in fact already contains all important ingredients to successfully establish a sample-based framework. It is noted however, that the exact implementation is still far from being obvious at this stage and requires further discussions. Essentially, the key idea that will enable our sample-based undertaking is in fact all along encoded in the Cram\'{e}r-Wold theorem \cite{cramer1936some}, which, in one of its different most prominent versions, states that if for two joint distributions \emph{all} marginals distributions \emph{in all directions} are the same, then the joint distributions are the same. Another way to put it is that a joint distribution is uniquely determined by its marginals in all different directions.

\begin{Theorem}[Cram\'{e}r-Wold Theorem]
A distribution of a random vector $X$ in $\mathbb R^n$ is uniquely determined by the family of distributions of $\langle v,X\rangle$, with $v \in \mathbb S^{n-1}$.
\label{cramer_wold}
\end{Theorem}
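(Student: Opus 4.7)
The plan is to reduce the statement to the well-known uniqueness theorem for characteristic functions (the injectivity of the Fourier transform on finite Borel measures on $\mathbb{R}^n$), which I would invoke as a black box.

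First, I would fix two random vectors $X, X'$ in $\mathbb{R}^n$ such that $\langle v, X\rangle$ and $\langle v, X'\rangle$ are equal in distribution for every $v \in \mathbb{S}^{n-1}$, and aim to show that $X$ and $X'$ have the same distribution. I would introduce the characteristic function $\varphi_X : \mathbb{R}^n \to \mathbb{C}$, defined by $\varphi_X(t) = \mathbb{E}[e^{i\langle t, X\rangle}]$, and recall that, by the classical uniqueness theorem, the distribution of $X$ is uniquely determined by $\varphi_X$.

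The key observation is the following elementary identity. For any $t \in \mathbb{R}^n \setminus \{0\}$, write $t = r v$ with $r = \|t\| > 0$ and $v = t/\|t\| \in \mathbb{S}^{n-1}$. Then
\begin{align*}
\varphi_X(t) = \mathbb{E}\bigl[e^{i\langle rv, X\rangle}\bigr] = \mathbb{E}\bigl[e^{ir\langle v, X\rangle}\bigr] = \varphi_{\langle v, X\rangle}(r),
\end{align*}
so $\varphi_X(t)$ depends only on the one-dimensional distribution of $\langle v, X\rangle$, and at $t = 0$ one has $\varphi_X(0) = 1$ trivially. Under the hypothesis, $\langle v, X\rangle$ and $\langle v, X'\rangle$ have identical distributions for each $v \in \mathbb{S}^{n-1}$, hence identical characteristic functions, and the identity above gives $\varphi_X(t) = \varphi_{X'}(t)$ for every $t \in \mathbb{R}^n$.

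Applying the uniqueness theorem for characteristic functions then yields equality of the distributions of $X$ and $X'$, completing the proof. The only nontrivial ingredient is the injectivity of the Fourier transform on probability measures, which I would quote rather than re-derive; everything else is a one-line change of variables. The main ``obstacle'' is really just to recognize that one-dimensional marginals along all directions are precisely what is needed to recover the full multivariate characteristic function along all rays through the origin, which together sweep out all of $\mathbb{R}^n$.
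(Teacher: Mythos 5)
Your proposal is correct and follows essentially the same route as the paper: both arguments rest on the identity $\varphi_{\langle v,X\rangle}(s)=\varphi_X(sv)$, which shows that knowing all one-dimensional marginal distributions determines the multivariate characteristic function on every ray through the origin, and then invoke the uniqueness theorem for characteristic functions. Your version merely spells out the two-random-vector formulation and the $t=0$ case a bit more explicitly; there is no substantive difference.
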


\begin{proof}
The proof follows from a straightforward computation relating the characteristic function of $\langle v,X\rangle$ with that of $X$,
\begin{align*}
   \varphi_{\langle v, X\rangle} (s) = \mathbb E\big[e^{is\langle v,X\rangle}\big] = \mathbb E\big[e^{i \langle sv, X \rangle}\big] = \varphi_X (sv).
\end{align*}
Since $\varphi_{\langle v, X\rangle} (s)$ is given for all $v \in \mathbb S^{n-1}$ and all $s\in \mathbb R$, by the above identity we know the characteristic function $\varphi_X$ of $X$, and thus also the distribution.
\end{proof}

The Cram\'{e}r-Wold theorem can in fact be easily relaxed to cases in which marginal distributions are not available in all directions, but rather only in a smaller set of directions, which is closely related to the issue of limited angle tomography. In \cite{zeng2015tac}, we studied the underlying mathematical problem and were in particular able to provide complete insight into the connection between the required ``minimal'' set of directions and properties of $(A,C)$, which would, analogous to the classical observability of a linear system, determine whether the underlying system is ensemble observable or not. As we will see, most examples of systems that are ensemble observable will not possess the property that $\ker Ce^{At}$ covers all possible ``directions''. A specific example illustrating this fact very clearly is a double integrator (see Section~\ref{sec:ensemble_observer}).

In light of this particular perspective on the Cram\'{e}r-Wold theorem, the idea would thus be to produce samples in $\mathbb R^n$ so that the projections of the sample points in all available directions are as close as possible to the corresponding output histograms. The key to achieve this is to use an optimal transport approach to measure the closeness between the histograms of the projected samples and the output histograms and to devise a suitable correction strategy that will yield a matching of the two histograms.

Let us discuss this mathematically in the case that the states are $n$-dimensional and that the output is scalar. Let the ensemble state estimator consist of $N$ particles $\widehat{x}^{(i)}$, where $N$ is (of course) taken to be sufficiently large. For each direction $v \in \mathbb R^n$, suppose that we have $M$ particles $\langle v, x^{(i)} \rangle$, where the $x^{(i)}$ are samples from the joint distribution. We then produce a histogram for these measured samples and also produce a histogram for the projected estimator states $\langle v, \widehat{x}^{(i)}\rangle$ with the same bins $[v_j, v_{j+1}]$ with $j = 1, \dots , \ell$. The situation is illustrated in Figure~\ref{fig:3d_plot_hist}. 
\begin{figure}[htp!]
	\centering
\includegraphics[width=0.38\textwidth]{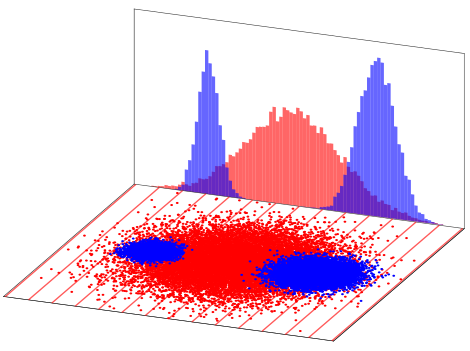}
\vspace{-0.2cm}
	\caption{This figure shows the sample points from the reference distribution (blue) and the estimator's initial configuration of its sample points (red). The histograms of the marginalizations in one particular direction are illustrated in the back. By choosing the same bins for the two histograms, we can describe these as two vectors, whose entries are the (normalized) frequencies.}
	\label{fig:3d_plot_hist}
\end{figure}

When the bins of the two histograms are identical, both histograms can be described by the vectors
\begin{align*}
q^v = \begin{pmatrix} {q}_1^v & \dots &{q}_\ell^v \end{pmatrix}, \;\;\;\;\;\;\;
\widehat{q}^v = \begin{pmatrix} \widehat{q}_1^v & \dots & \widehat{q}_\ell^v \end{pmatrix}
\end{align*}
containing the normalized frequency of projected samples in the respective $\ell$ bins. As such, they are probability vectors, i.e. $\| q^v\| _1 = \|  \widehat{q}^v \| _1 = 1$. The aforementioned correction strategy is then given by ``morphing'' the probability vector $\widehat{q}^v$ into the probability vector $q^v$, i.e.\ to (optimally) redistribute the mass in the different bins of $\widehat{q}^v$ so as to obtain the mass distribution as specified in $q^v$. The problem of transforming one distribution into another by a suitable transport map is illustrated in Figure~\ref{fig:transporting_marginals}.

\begin{figure}[htp!]
	\centering
	\includegraphics[width=0.315\textwidth]{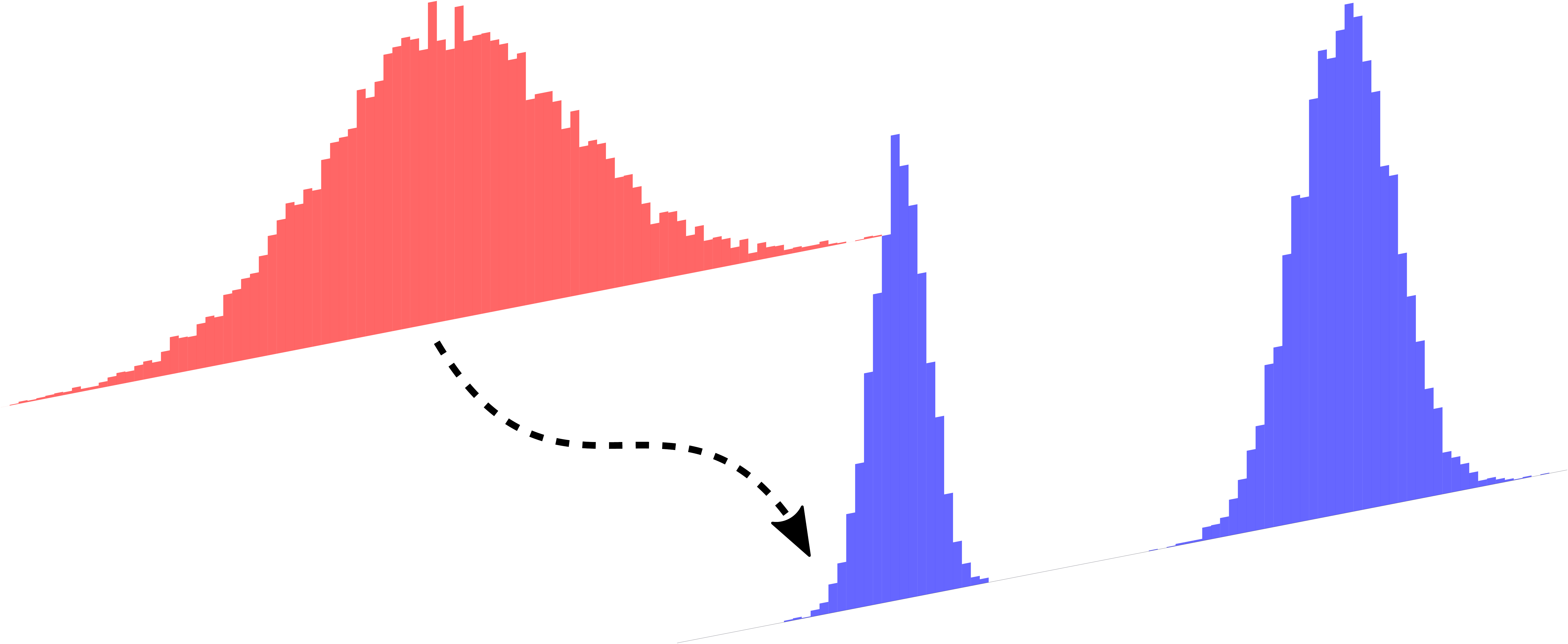}
	\put(-118,11){?}
\vspace{-0.2cm}
	\caption{This figure illustrates the idea of finding a way to transport one distribution into another, or, equivalently, transporting the associated probability vectors into another.}
	\label{fig:transporting_marginals}
\end{figure}

This is in fact the most basic instance of an optimal mass transport problem, namely one in a completely finite-dimensional setting. Here one is seeking for a so-called \emph{transport plan}, which in the discrete setting is specified by a matrix $T\in \mathbb R^{\ell \times \ell}$ with non-negative entries so that
\begin{align*}
  \sum_{i=1}^\ell T_{ij} = \widehat{q}_{j}^v, \hspace{0.5cm}
  \sum_{j=1}^\ell T_{ij} = q_{i}^v.
\end{align*}
The intepretation is that the entry $T_{ij}$ would dictate how much of the (probability) ``mass'' $\widehat{q}_{j}^v$ in the $j$th bin of the histogram is to be transported to the $i$th bin, so that eventually $\widehat{q}^v$ will be completely transformed into $q^v$. 

The aforementioned optimality is incorporated into this framework by additionally considering the cost functional 
\begin{align*}
J = \sum_{i=1}^\ell \sum_{j=1}^\ell |i-j| \, T_{ij}.
\end{align*}
From a physical perspective, this is a very reasonable choice as it favors transport plans that realize the transportation of one mass distribution into another in the most economical way. But this particular choice also leads to additional nice mathematical features, such as the fact that in this case the dual problem is a linear program involving only $\ell$ optimization variables instead of $\ell^2$ variables. This is commonly referred to as the Kantorovich-Rubenstein duality. An even faster way to (approximately) solve this particular case of an optimal mass transport problem for large problem sizes is through the so-called method of Sinkhorn iterations \cite{cuturi2013sinkhorn}. Having solved the optimal transport problem, we obtain the transport plan $T$ for mapping the two vectors containing the frequencies in the different bins, as illustrated in Figure~\ref{fig:transport_plan}.

\begin{figure}[htp!]
\vspace{-0.1cm}
	\centering
	\includegraphics[width=0.4\textwidth]{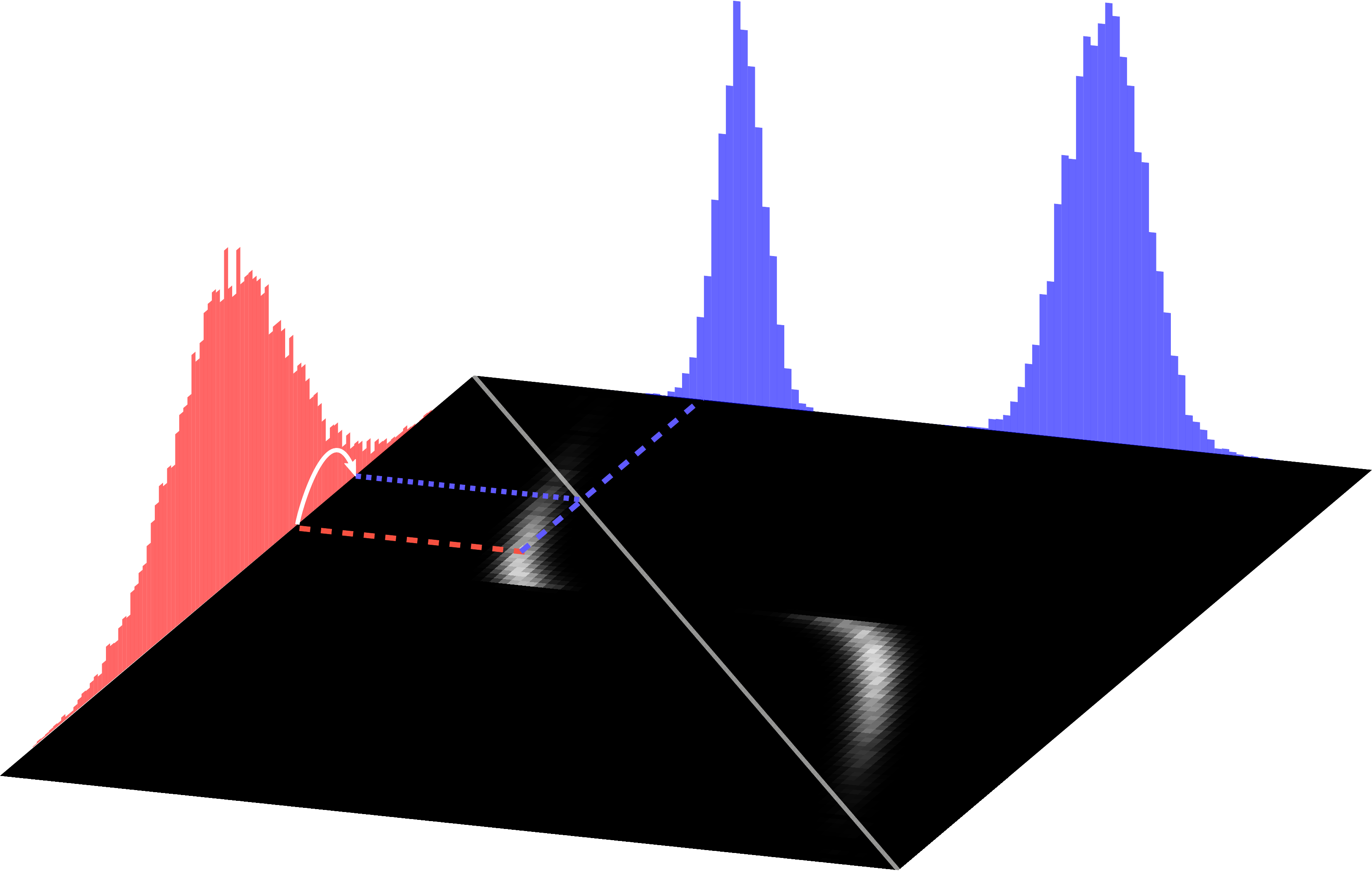}
	\caption{This figure shows a visualization of the transport plan, with the intensity in a pixel corresponding to the magnitude of the corresponding entry in the transport plan matrix (gray scale), as well as the two marginal distributions (red and blue). The red and blue dashed lines indicate how the transport plan is related to the two corresponding marginal distributions. The dotted blue line is the result of reflecting the blue dashed line about the diagonal line, and highlights the position towards which the mass highlighted by the red dashed line is to be transported, as summarized by the white arrow between the two corresponding bins.}
	\label{fig:transport_plan}
\end{figure}

So far, we have discussed a solution that describes which corrective measures have to be implemented on the level of distributions so as to match marginal distributions of the estimator to marginal distributions of the actual particle system. The original problem, however, does not solely consist in solving such an optimal transport problem on the level of vectors, but the vectors result from describing the marginalizations of the original sample points of the original system and the estimator, respectively. Thus, the described optimal transport procedure constitutes only a part of the solution, and to obtain a complete implementation of this correction scheme some further discussion is required. 

In the following, an implementation of this correction scheme \emph{on the level of the original particles} is presented. For all $N$ particles $\widehat{x}^{(i)}$ of the ensemble state estimator find the number $m$ of the bin in which $\langle v,\widehat{x}^{(i)} \rangle$ is contained in. The normalized $m$th line of the transport plan matrix $T$, which is a probability vector, is used as follows: 
With probability $T_{mj}$ the particle $\widehat{x}^{(i)}$ is moved to the $j$th bin, by translating it in the normal direction of $v \in \mathbb R^n$. To ensure a certain ``regularity'' of the resulting set of samples, the exact displacement is also randomized, allowing the corrected particle to lie anywhere in the $j$th bin with equal probability.

Figure~\ref{fig:corrected_two_sides} illustrates a situation, in which the estimator state has been corrected with respect to the highlighted direction, but admits a large deviation with respect to a different direction. Clearly, the above described plan will have to be repeated for sufficiently many directions $v \in \mathbb R^n$. The iteration over all different directions $v$ itself can be iterated several times, similarly to the procedure in the Algebraic Reconstruction Technique in computed tomography.
\begin{figure}[htp!]
\centering
\includegraphics[width=0.38\textwidth]{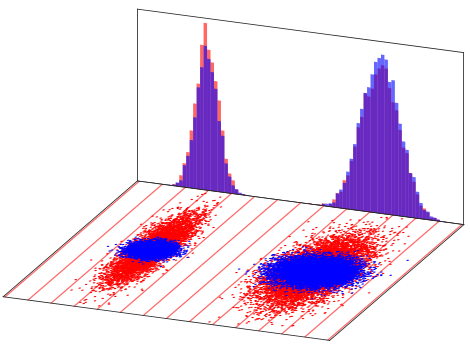}
\vspace{-0.2cm}
	\caption{This figure illustrates the situation in which the presented correction scheme has been carried out with respect to the highlighted direction. The illustrated marginal distribution of the estimator particles matches the marginal distribution of the particles from the actual initial state distribution. Note that the marginal distributions in other directions, e.g.\ that orthogonal to the highlighted one, are clearly not matched, which will eventually have to be addressed in further iteration steps.}
	\label{fig:corrected_two_sides}
\end{figure}
The intuitive idea is that by doing so, we expect to eventually end up with a configuration of particles $\widehat{x}^{(i)}$ whose projections along all given directions are at once in accordance with the actual data. By virtue of the Cram\'{e}r-Wold theorem, in the idealized case that $N \to \infty$, and that all (a sufficient set of) directions are available, we would end up witha perfect approximation of the joint distribution by means of samples of the distribution.

Figure~\ref{fig:harmonic_oscillator_initial_distribution} illustrates the correction scheme for the linear harmonic oscillator, where two correction steps are highlighted. In this particular case, with only two simple iterations, we are already able to achieve a quite acceptable reconstruction.
\begin{figure}[htp!]
	\centering
	\includegraphics[width=0.49\textwidth]{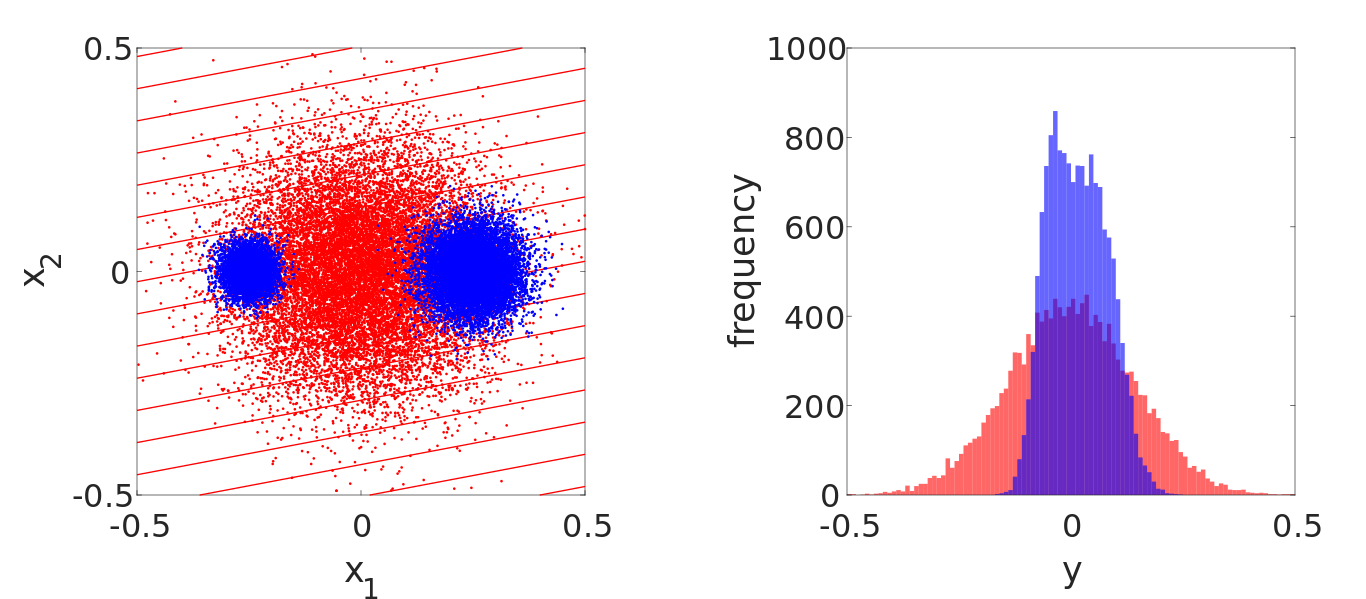} \\[0.3cm]
	\includegraphics[width=0.49\textwidth]{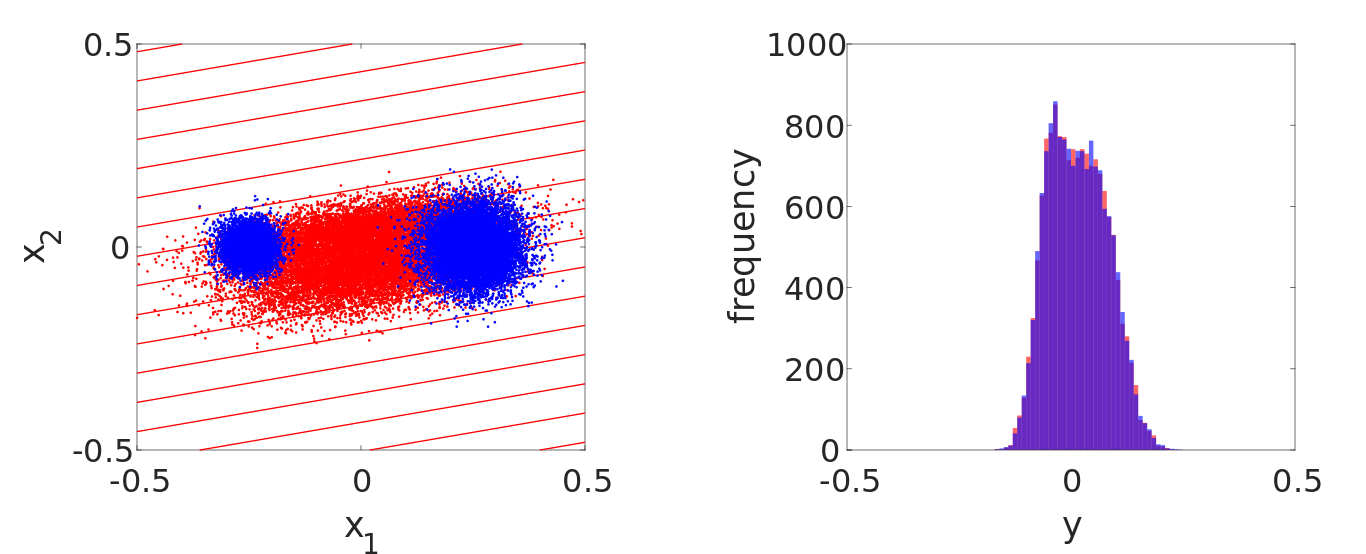} \\[0.3cm]
		\includegraphics[width=0.49\textwidth]{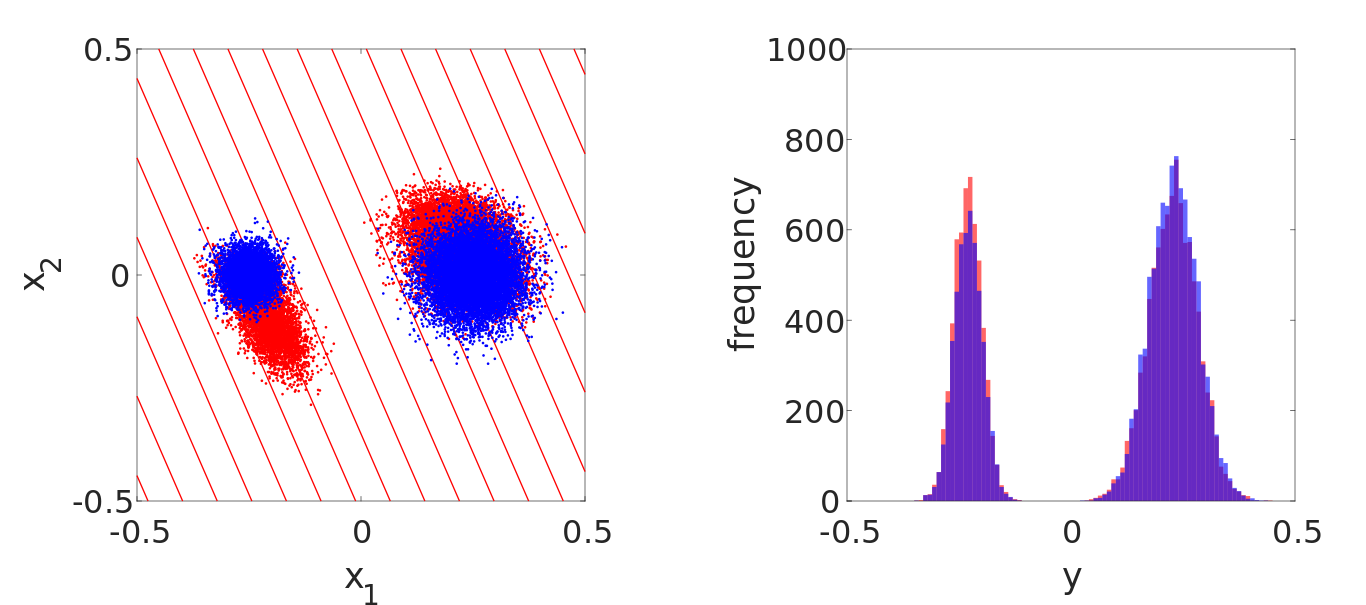} 
	\caption{First row: The actual initial distribution (blue) and a prior estimate (red) are illustrated on the left. The right plot shows the histogram of the projections of the two distributions along the highlighted direction in the left plot. Middle row: The ensemble estimator's state is updated so that the marginals of the projections in the highlighted direction match. Last row: Illustration of a second update of the ensemble estimator's state associated to a different direction.}
	\label{fig:harmonic_oscillator_initial_distribution}
\end{figure}

At this point, we would like to draw some attention to the particular architecture of this correction-based (particle) state estimator. The correction is essentially implemented by means of a two-layer feedback: First, the mismatch between the outputs of the estimator and of the actual system is evaluated on the \emph{population level}, from which a correction on the population level is computed. In particular, at this stage, no attention is paid to individual systems but only the totality of individual systems. The correction  in the next step on the other hand has to be actually realized by implementing it on the level of the individual particles. In particular, it cannot be fully implemented on the population level, i.e.\ by completely broadcasting an instruction to the systems in the ensemble. Rather, different individual systems in the population will be required to receive different instructions (in this case it is based on the bins they are located in). To summarize, though our presented scheme does not operate entirely on a population-level, it is also not a completely individual feedback. Rather it constitutes a quite simple to implement, yet very powerful \emph{hybrid}, given by a two-layer structure, which we may refer to as a \emph{population-level feedback}.

In the case of nonlinear systems, the displacement of the particles for the correction step would need to take place in the direction orthogonal to the curved strips. This would require the computation of the normal direction at each point of the curved level surface, i.e.\ the gradient $\nabla (h \circ \Phi_t)$, which imposes new computational burdens. However, at this point we can again leverage our insight about the conservation of number of samples between two lines, leading us to the idea to correct the propagated particles at time $t$ in the normal direction of $h$ and to apply the reverse flow to the corrected particles, as was illustrated in Figure~\ref{fig:nonlinear_ensemble_observability}. Thus, a simple remedy by which the computation of gradients is circumvented is given by splitting up what in the linear case can be naturally implemented in a single step into two steps by means of an intermediate step. 

To summarize, in the aforementioned unfolded correction procedure, we transport the state distribution of the estimator forward to a given measurement time, compare the its output distribution with the measured output distribution at hand, and then implement the correction at that given measurement time. Then, after the transport plan has been implemented, the state distribution is transported backwards to the initial time. It is to be stressed, however, that the resulting action of this approach on the estimate of the initial state distribution is not necessarily one where the particles were projected orthogonally to the level sets of $(h\circ\Phi_t)$, as the flow $\Phi_t$ need not be angle preserving in general. A detailed illustration of one correction step in the nonlinear case is shown in Figure~\ref{fig:nonlinear_oscillator}, where the resulting action of the unfolded correction procedure is also clearly displayed. The same strategy of course also applies to the linear setting, where it is, however, easier to apply the correction in one simple step.

\begin{figure}[htp!]
	\centering
\vspace{-0.1cm}
\includegraphics[width=0.235\textwidth, trim= 0cm 0cm 1cm  0cm, clip]{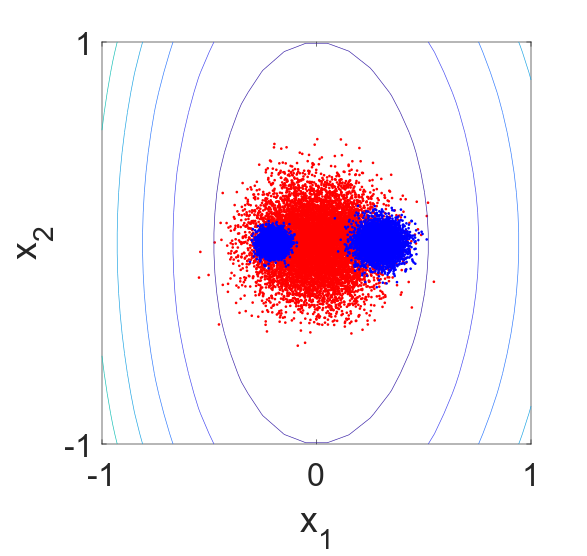} \; \includegraphics[width=0.235\textwidth, trim= 0cm 0cm 1cm  0cm, clip]{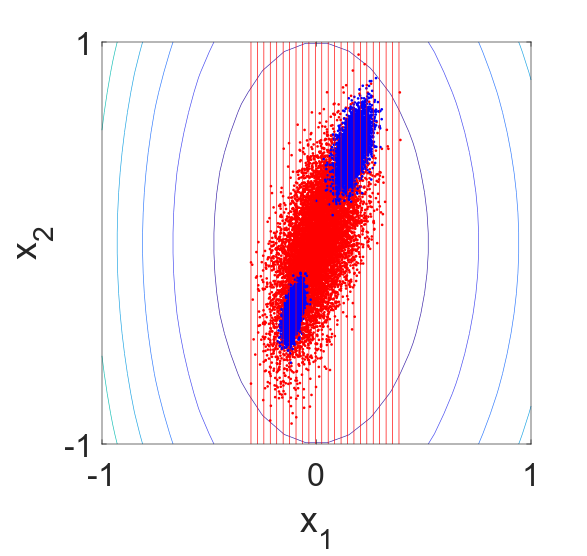} \\ \includegraphics[width=0.235\textwidth, trim= 0cm 0cm 1cm  0cm, clip]{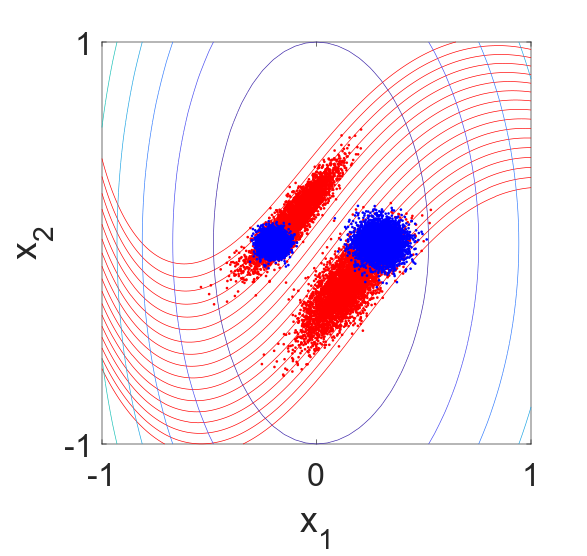}  \; \includegraphics[width=0.235\textwidth, trim= 0cm 0cm 1cm  0cm, clip]{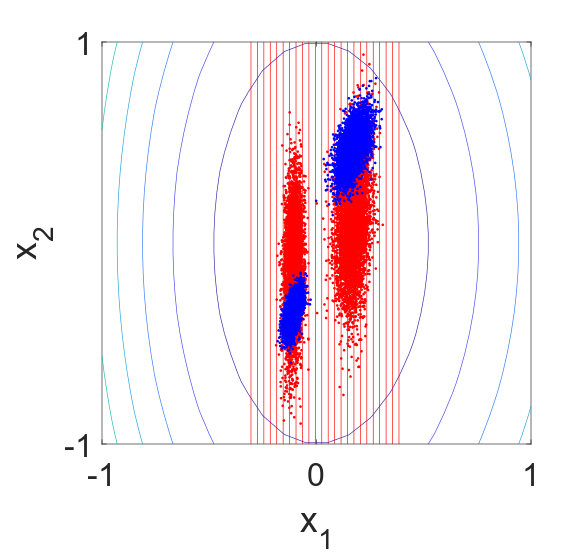}
\vspace{-0.4cm}
\caption{Top left: The initial state distribution (blue) and the estimated initial state distribution (red) before any correction step has been applied. Top right: The two distributions after being transported with the nonlinear oscillator to a given time point, as well as the level sets of the output measurement. Lower right: Correction step using optimal mass transport. Lower left: The transported corrected distribution, as well as the transported level sets.}
\label{fig:nonlinear_oscillator}
\end{figure}

\section{The ensemble observer}
\label{sec:ensemble_observer}
In the previous section, we presented a novel particle-based approach for estimating the initial state distribution of an ensemble from output samples. As for any such state estimation problem, we assumed to have all the measurements at different times stored and available to us at once. Another type of state reconstruction scheme is in a more dynamic spirit, in which the system's state is to be estimated online, i.e.\ at each time instant, the estimated state is updated based on the measurement received at that time point, or, more generally, from past measurements received up to that time point. From a more mathematical point of view, the problem considered in this section is the estimation of $p_{x(t)}$ from past output measurements $p_{y(\tau)}$, with $\tau \le t$, which, when formulated in these more theoretical terms, we recognize to be analogous to a classical filtering problem. So far, approaches to implement such a filtering approach have not yielded any fruits.

To illustrate particular difficulties that were encountered in the aforementioned approaches, we shall highlight two natural approaches that one would rather naturally consider in this context. The first approach would consider a partial differential equation describing the evolution of the estimated state distribution. It is well-known that the original ensemble system can be described by a linear partial differential equation, the Liouville equation \cite{brockett2012notes}, given by
\begin{align*}
 \frac{\partial}{\partial t} p(t,x) = -\text{div}(p(t,x)f(x)),
\end{align*}
where $p(t,\cdot)$ denotes the state density at time $t$. The output distribution results from the state distribution by a marginalization along $\ker C$, i.e.
\begin{align*}
 p_{y(t)}(y) = \int_{Cx=y}  p(t,x) \, \text{d}S.
\end{align*}
We denote the mapping $p(t, \cdot) \mapsto p_{y(t)}$ by $\mathcal C$. In the spirit of the classical Luenberger observer \cite{luenberger1971introduction}, having one part simulating the system and another part correcting based on the incoming output measurements as its basic design principle, it is indeed natural to consider an observer described by 
\begin{align*}
 \frac{\partial}{\partial t} \hat{p}(t,x) = -\text{div}(\hat{p}(t,x)f(x)) + \mathcal L[ \hat{p}_{y(t)} -  p_{y(t)}],
\end{align*}
where $\hat{p}_{y(t)} = \mathcal C \hat{p}(t, \cdot)$. Defining 
$
e(t,x) := \hat{p}(t,x) - p(t,x)
$
as the estimation error, in the approach based on partial differential equations, the problem boils down to designing the (linear) operator $\mathcal L$ so that the error dynamics
\begin{align*}
    \frac{\partial}{\partial t} e(t,x) = -\text{div}(e(t,x) f(x)) + (\mathcal L\mathcal C e)(t,x)
\end{align*}
is asymptotically stable. However, due to the fact that the action of $\mathcal C$ is a rather unique one, not falling into any well-studied category of operators in the theory of infinite-dimensional systems theory \cite{curtain1995introduction}, a general solution to this stabilization problem remains out of reach.

Another natural idea that circumvents the infinite-dimensional setting is to first discretize the state space, e.g.\ by approximating the considered probability density functions by piecewise constant functions, and then to reformulate the system dynamics for these finite-dimensional approximations. However, in trying to do so, we will at some point encounter a rather fundamental problem associated to this idea, which can be already seen for a simple linear oscillator. If the discretization of the state space is not tailored to the specific vector field at hand, say, we choose a simple discretization into pixels in $\mathbb R^2$, then the resulting discretized linear system will no longer admit the mass preserving property. This is because in implementing this discretization scheme, we inevitably have to truncate the discretization of the state space to some region of interest, of which the boundaries will suffer from leakage of mass, but will not provide mass from outside, the outside part being truncated. Thus, for an observer based on this idea of discretization, the part that simulates the system will not be able to reproduce the actual system behavior. In fact, the state generated by the simulation part will naturally converge to zero as the incoming flow inevitably has to be truncated, and the general trend will thus be that the whole mass will eventually leak out at the boundaries.

Using the new insights from our first sample-based implementation, we are already able to formulate a new sample-based ensemble observer, at least for the case of the harmonic oscillator. There, we just let the system evolve and keep correcting the mismatch between the output distributions instantaneously. Due to the duality illustrated in Figure~\ref{fig:harmonic_oscillator_duality}, this results in essentially the same correction scheme as in the state estimation case, which makes the results from the foregoing state estimation case directly applicable to the ``dynamic estimation'' of a harmonic oscillator. The application of such a strategy is, however, not always feasible, as will be discussed and highlighted in the next subsection for the example of a double integrator. There we will also establish an observer for arbitrary ensemble observable systems based on a moving horizon estimation scheme that batches past measurements and processes these along the lines of the estimation of the initial state distribution.

\subsection{Moving horizon ensemble estimator}
An example of a system in which a one-step (memoryless) approach does not yield satisfactory results is given by
\begin{align*}
 \dot{x} &= \begin{pmatrix} 0 & 1 \\ 0 & 0 \end{pmatrix} x, \\
	y &= \begin{pmatrix} 1 & 0 \end{pmatrix} x,
\end{align*}
which is a simple double integrator. We can directly compute $Ce^{At} = \begin{pmatrix} 1 & t \end{pmatrix},$ allowing us for the specific example of a double integrator to write down the relation between angle $\alpha$ of $\ker Ce^{At}$ and time $t$ explicitly as
\begin{align}
\tan(\alpha) = \frac{x_2(t)}{x_1(t)} = t   \;\; \Leftrightarrow \;\; \alpha = \arctan(t).
\label{eq:arctan}
\end{align}
This very simple reading relation shows that unlike in the example of a harmonic oscillator, the maximal spread of achievable angles is inherently restricted to the range of $t \mapsto \arctan(t)$. Moreover, the explicit relation allows us to choose the time points of measurement $t_k$ in such a way that the corresponding set of angles is uniformly distributed, which in turn is expected to yield better results for the reconstruction.

Due to the lack of a persistent, or, recurrent, oscillation encoded in the mapping $t \mapsto Ce^{At}$ in the case of a double integrator, the foregoing simplistic strategy of an instantaneous (memoryless) correction will not be applicable. Recalling the filtering formulation introduced in the beginning of this section, where the problem is to estimate $p_{x(t)}$ from past output measurements $p_{y(\tau)}$, with $\tau \le t$, we note that unlike in the classical linear setup with a single point particle, at this point it does not seem to be possible to get a solution as elegant and \emph{fully recursive} as the Kalman filter for the ensemble case. One major cause is that due to computational and also memory restrictions, further restrictions on the horizon, which in the purely theoretical framework would be specified by $\tau \le t$, need to be placed. In view of a more practical formulation, a more realistic choice would be restricting the time points at which output data is available to
$
 t-T_H \le \tau \le t,
$
where $T_H$ denotes the horizon length and $[t-T_H,t]$ is called the moving horizon. Of course, when practically implementing such a moving horizon scheme, we also need to further assume that the available measurement times are not continuous, but discrete time points. 

We will show in the following that the estimation of the current ensemble state distribution from past output distributions (in the measurement horizon) is inherently dual to the estimation of the ensemble initial state distribution from the generated output distributions (forward in time). For a single particle, the relation between the state $x(t)$ and the output $y(\tau)$ at an earlier time point $\tau \le t$ is given by
\begin{align*}
Ce^{A(\tau-t)}x(t) = y(\tau).
\end{align*}
On the level of the distributions, this would translate to the description that the state distribution of an ensemble $\mathbb P_{x(t)}$ is related to the output distribution $\mathbb P_{y(\tau)}$ at an earlier time point $\tau \le t$ through a pushforward relation where the forward mapping is the linear map $x \mapsto Ce^{A(\tau-t)}x$. To implement this, we apply a procedure dual to the one for estimating the initial state distribution. The state distribution $\mathbb P_{\hat{x}(t)}$ that we would like to estimate at time $t$ will be constantly corrected based on previous measurements associated to time points $\tau \le t$  within the horizon by the same procedure as in the estimation problem for the initial state distribution: We propagate the particles of $\mathbb P_{\hat{x}(t)}$ with $x \mapsto Ce^{A(t-\tau)}x$ to the output distribution $\mathbb P_{\hat{y}(\tau)}$, compute a correction strategy based on the mismatch between $\mathbb P_{\hat{y}(\tau)}$ and $\mathbb P_{y(\tau)}$, and implement the correction on the particles of $\mathbb P_{\hat{x}(t)}$. Figure~\ref{fig:MHE_double_integrators} illustrates a resulting tracking process using the proposed method for an ensemble of double integrators.
\begin{figure}[htp!]
	\centering
	\includegraphics[width=0.45\textwidth]{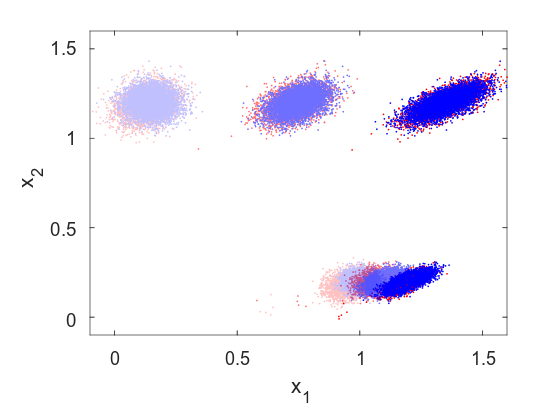}
\vspace{-0.2cm}
	\caption{Three successive predictions at $t = 0.5, 1.0, 1.5$ (distinguished by transparency), each of which is computed from 10 time points in the measurement horizon with $T_H = 3$. }
	\label{fig:MHE_double_integrators}
\end{figure}

In this example, the specific horizon length $T_H = 3$ was chosen so as to guarantee a sufficiently large spread of available directions for each correction step. Note that by defining $\tau':= \tau - t$, which takes values in the interval $[-T_H,0]$, we see that the directions are dictated by $\ker Ce^{A \tau'}$, where $\tau' \in [-T_H,0]$, which result from transporting $\ker C$ forward in time in the interval $[0,T_H]$. Due to the simple relation between angles and times established in \eqref{eq:arctan}, the range of available angles would be 0 to $\text{arctan}(3) \approx 71.56^{\circ}$. In order to facilitate a wider spread, a longer horizon would need to be provided. For example, in order to have a spread of $85^{\circ}$, it would already require a horizon length of $T_H = 11.43$. 

Within the measurement horizon, the output distributions of the actual ensemble are of course not measured continuously, but at discrete time points. In the example, the times at which measurements of the output distributions are available are
$
 t_k  = t - 0.1k,
$
where $k = 1, \dots, 30$.
Out of these 30 measurement times only 10 measurement times are actually utilized by incorporating the measurement data for the correction steps at each prediction step. Of course, one could in fact increase the number of time points used for the reconstruction at each time step, and also increase the number of correction steps performed at each prediction step. This would, however, result in an increased computational load at each prediction step. 

We note that in the implementation of this example, the 10 time points are chosen randomly from the above measurement times in such a way that the distribution of corresponding angles would be as uniform as possible. More specifically, due to the nonlinear relation $\alpha = \tan(t)$, choosing random times $t_k$ from a uniform distribution defined over $\{t_k\}$ would not result in a uniform distribution of the corresponding $\alpha_k$. Instead, one has to sample with respect to a specific (discrete) distribution $t_k \sim P_t$, which guarantees that the distribution for $\alpha_k = \tan(t_k)$ is (close to) a uniform distribution. The detailed discussion of these issues, while of great practical importance, is beyond of the scope of this paper.

In summary, with the above described procedure we obtain a quite satisfactory method for solving the continuous ensemble observability problem in an on-line fashion. In the following section, we will turn to the study of the closely related discrete ensemble observability problem. We will also see that for single-output systems, we can derive another novel, yet very natural particle tracking method from the considerations of the discrete case. This second method is based on an even more simplistic formulation, but at the same time computationally more demanding as it scales directly with the number of systems $N$, whereas the method presented in this section is designed for systems with large $N$, and, within this regime, scales only with the number of bins of the histograms only, which is much more favorable for very large $N$.

\section{The discrete ensemble observability problem}
\label{sec:discrete_ensemble}
Our proposed solution for the observer design of continuous ensembles in the foregoing sections also yields direct implications for the discrete version of the ensemble observability problem \cite{zeng2017tac}. In this closely related, but significantly different discrete setup, we consider a fixed number of $N$ systems and at each time step, the $N$ corresponding outputs are measured, however, in an anonymized fashion, i.e. the set of recorded output measurements of the $N$ different systems lack any information relating an individual output measurement in the set to the corresponding one system that yielded the measurement. This is also known as the multitarget tracking problem \cite{bar1978tracking}, where the rather unique premise is related to the type of measurement devices typically utilized in the domains related to multitarget tracking. 

This particular premise makes state estimation for multiple targets highly nontrivial, and the field of multitarget tracking has been subject to an extensive study \cite{smith1975branching, bar1978tracking, leven2009unscented}. Previous work was mainly aimed at developing approaches for a practical solution for multitarget tracking. On the other hand, the multitarget tracking problem is a very fundamental problem offering plenty of theoretical questions and challenges which have not been fully explored systematically. In \cite{zeng2017tac} we aimed to address the multitarget tracking problem from a more conceptual and theoretical point of view. In anticipation of a unification of the discrete and the continuous frameworks developed in \cite{zeng2015tac}, in \cite{zeng2017tac}, the problem was already formulated using the framework of discrete measures. This formulation will now indeed serve as a bridge by which different insights about the computational problem from the continuous case can be immediately applied to the discrete case as well. 

By virtue of this formulation, it is now also trivial to apply the methodology of optimal mass transport to the discrete case. The optimal transport problem of discrete measures is in fact a well-known special case (also known as an assignment problem), where the goal is to transport one discrete measure $\mu_y = \sum_{i=1}^N \delta_{y^{(i)}}$ to another discrete measure $\mu_{\widehat{y}} = \sum_{i=1}^N \delta_{\widehat{y}^{(i)}}$, i.e. associating $y^{(i)}$ to $y^{(\sigma(i))}$ with a permutation $\sigma$ in such a way that the cost functional
\begin{align*}
 J = \sum_{i=1}^N \| y^{(i)} - \widehat{y}^{(\sigma(i))} \|
\end{align*}
is minimized. When the outputs are scalar, in which case the discrete measures are defined on the real line, the solution to the assignment problem turns out to be particularly simple. Here one first sorts the randomly ordered tuples $(y^{(1)}, \dots, y^{(N)})$ and $(\widehat{y}^{(1)}, \dots, \widehat{y}^{(N)})$ in an increasing order. The corresponding permutations that realize this sorting are labeled $\sigma$ for the tuple $y$ and $\widehat{\sigma}$ for the tuple $\widehat{y}$. One can rather easily convince oneselves that the optimal assignment is then given by the permutation $\sigma^{\star} := \widehat{\sigma}^{-1} \circ \sigma,$ i.e.\ by pairing $y^{(i)}$ with $\widehat{y}^{(\sigma^{\star}(i))}$, which corresponds to designating $y^{(i)}$ to be transported to $\widehat{y}^{(\sigma^{\star}(i))}$. Given these pairings, the correction is implemented on the state space by projection in the normal direction, or, in other words, by orthogonally projecting the particle $x^{(i)}$ to its assigned hyperplane, which is defined by $$Ce^{A t_k} x = y^{(\sigma^{\star}(i))}(t_k).$$

As in the continuous ensemble observability problem, the insights gained from the connection to optimal mass transport problems can be directly leveraged to provide a solution to both the problem of estimating the initial state distribution from output data recorded over a given time frame, as well as the online observation problem. The above described procedure is illustrated on a small-scale example in Figure~\ref{fig:initial_state_estimation_discrete}, where an ensemble of five double integrators is considered. 
\begin{figure*}[htp!]
	\centering
  \includegraphics[width=0.315\textwidth]{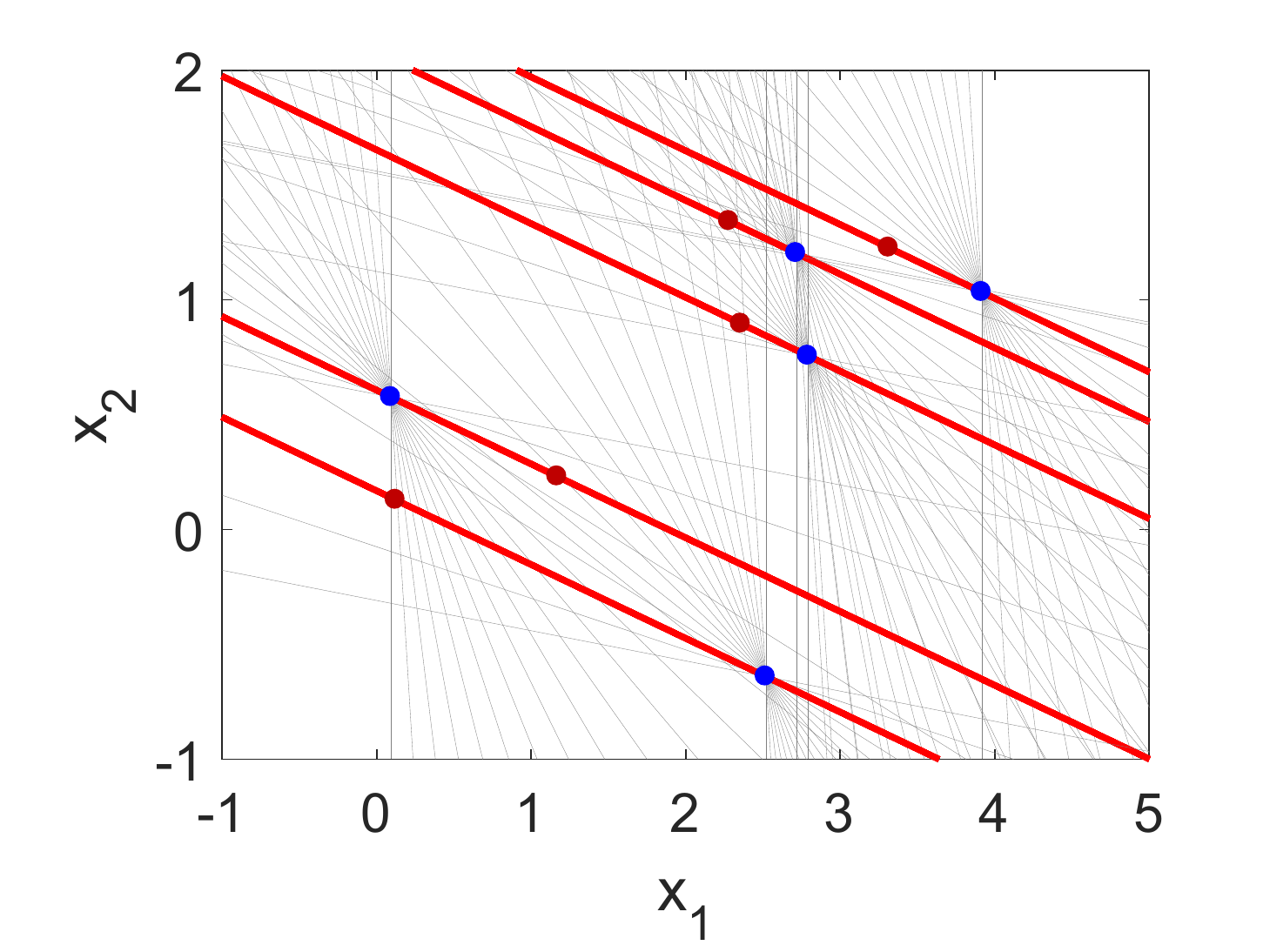} \, \includegraphics[width=0.315\textwidth]{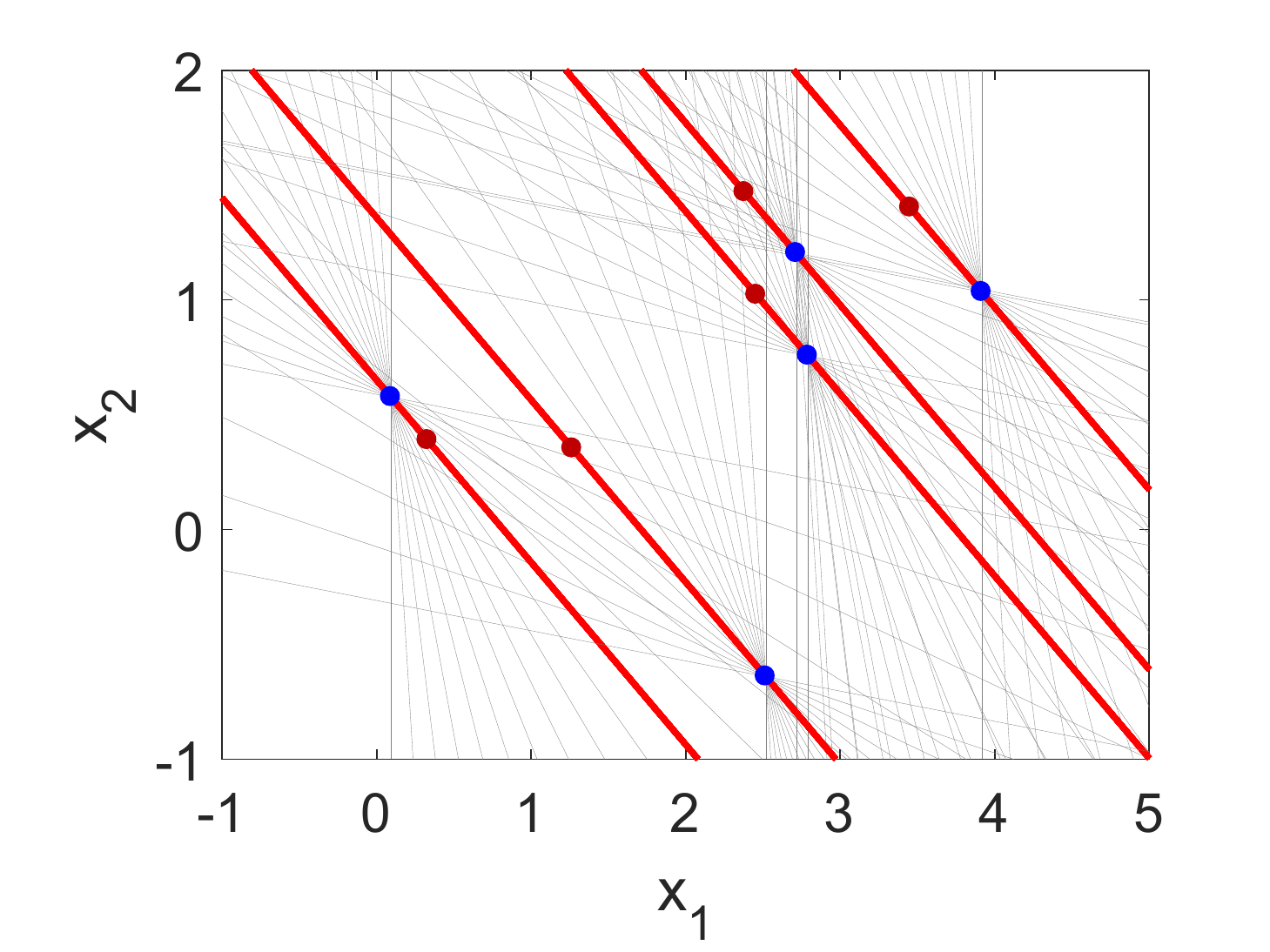}  \,  \includegraphics[width=0.315\textwidth]{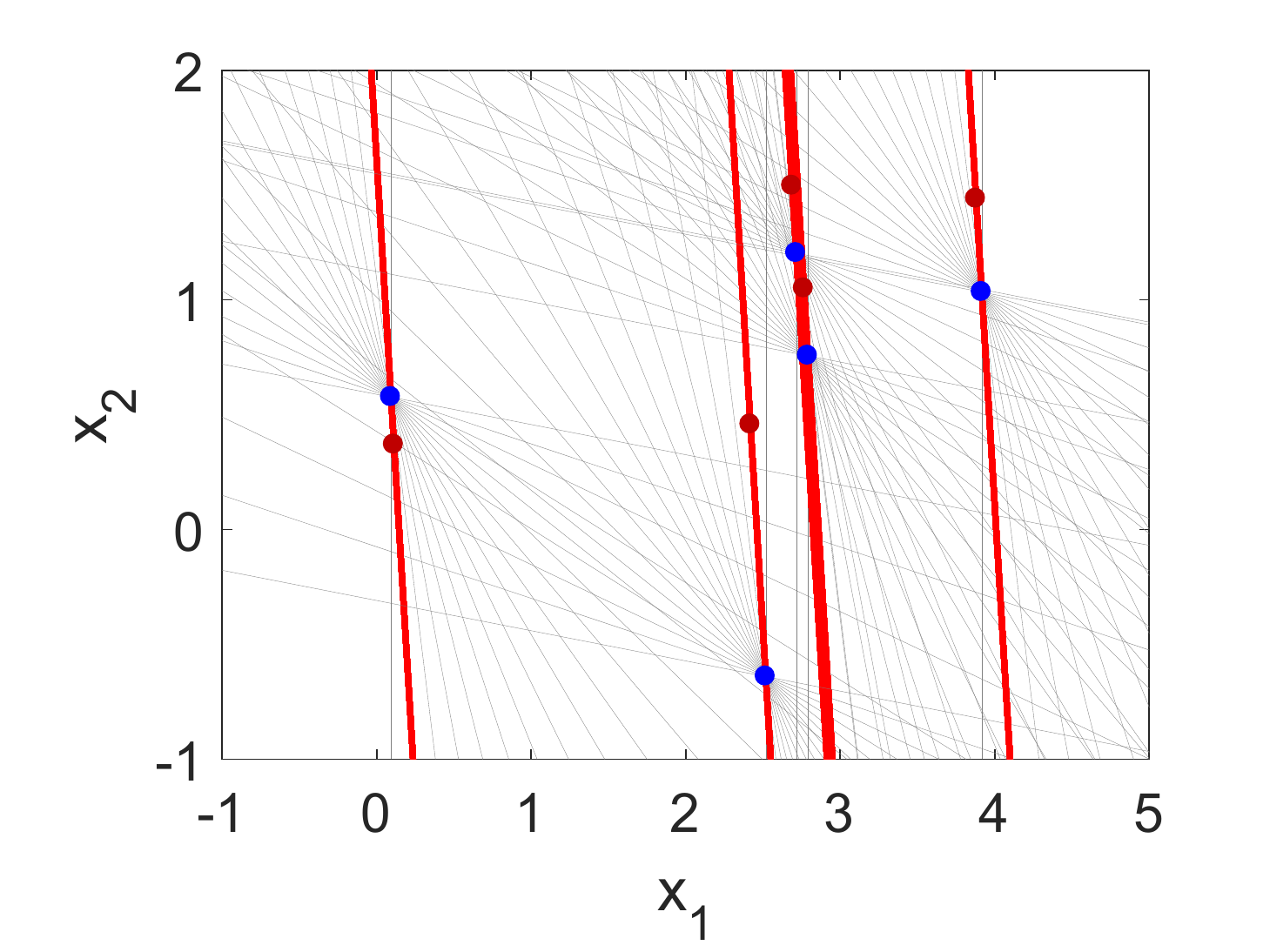}
  \includegraphics[width=0.315\textwidth]{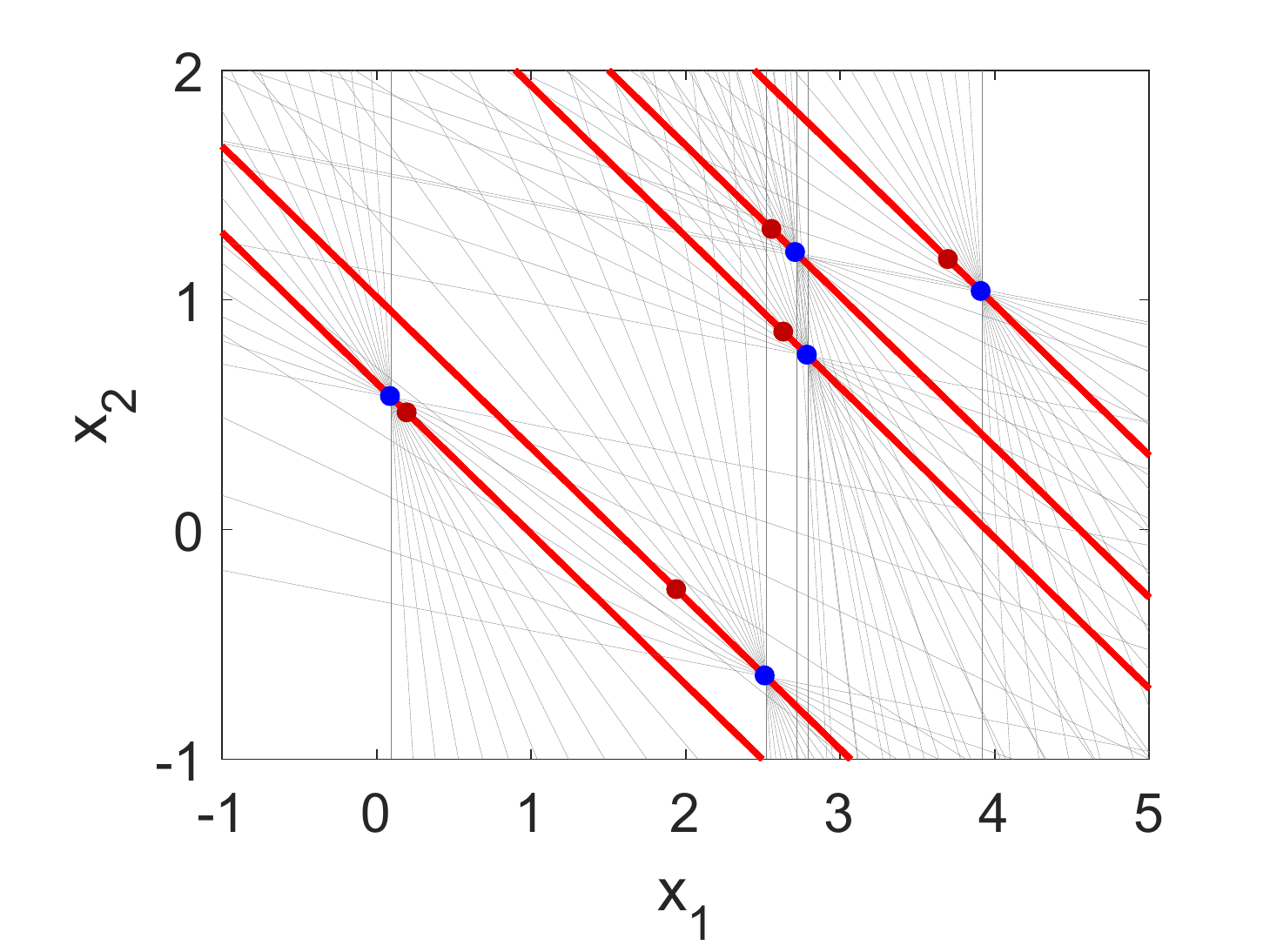} \,  \includegraphics[width=0.315\textwidth]{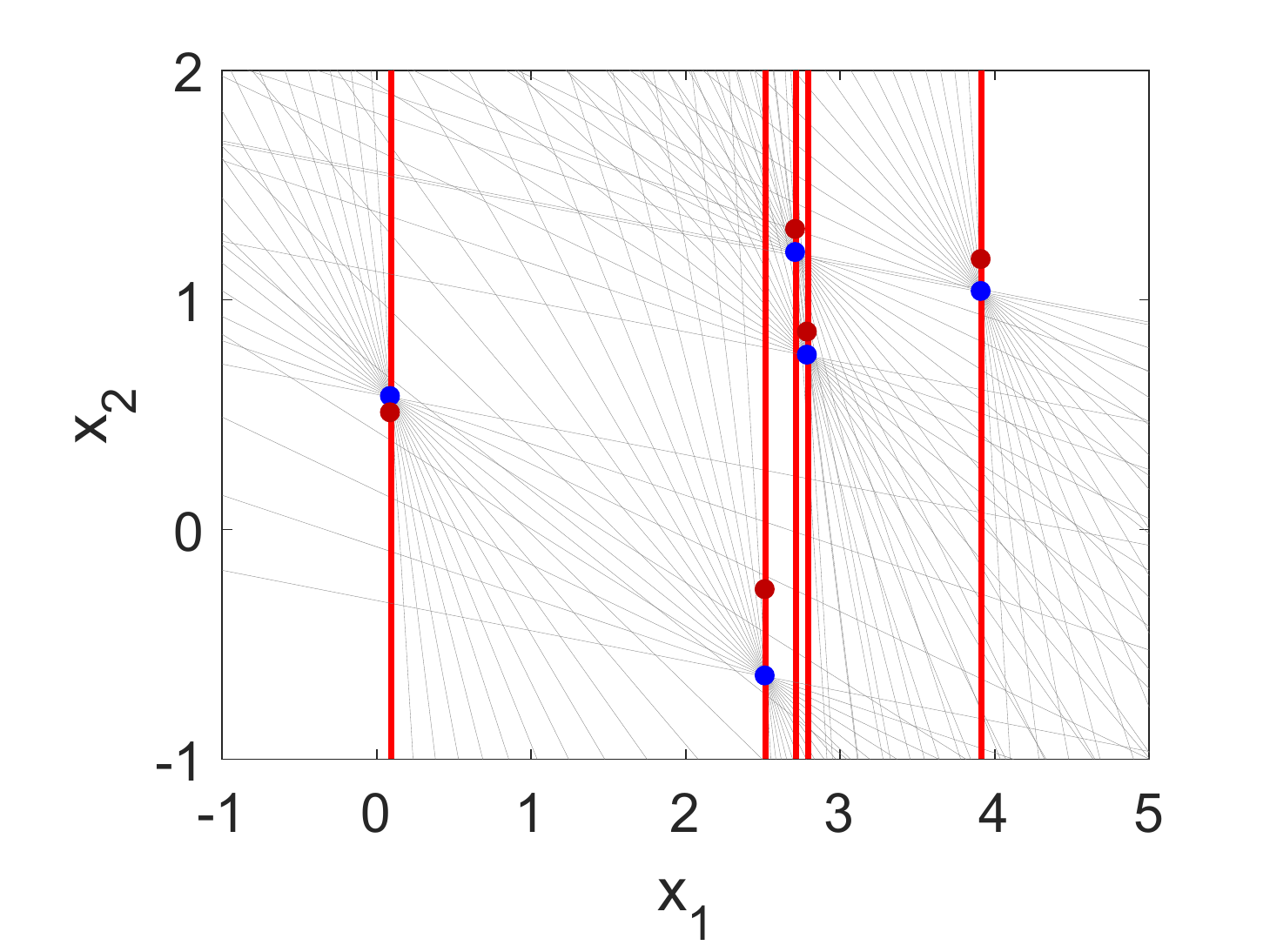} \,   \includegraphics[width=0.315\textwidth]{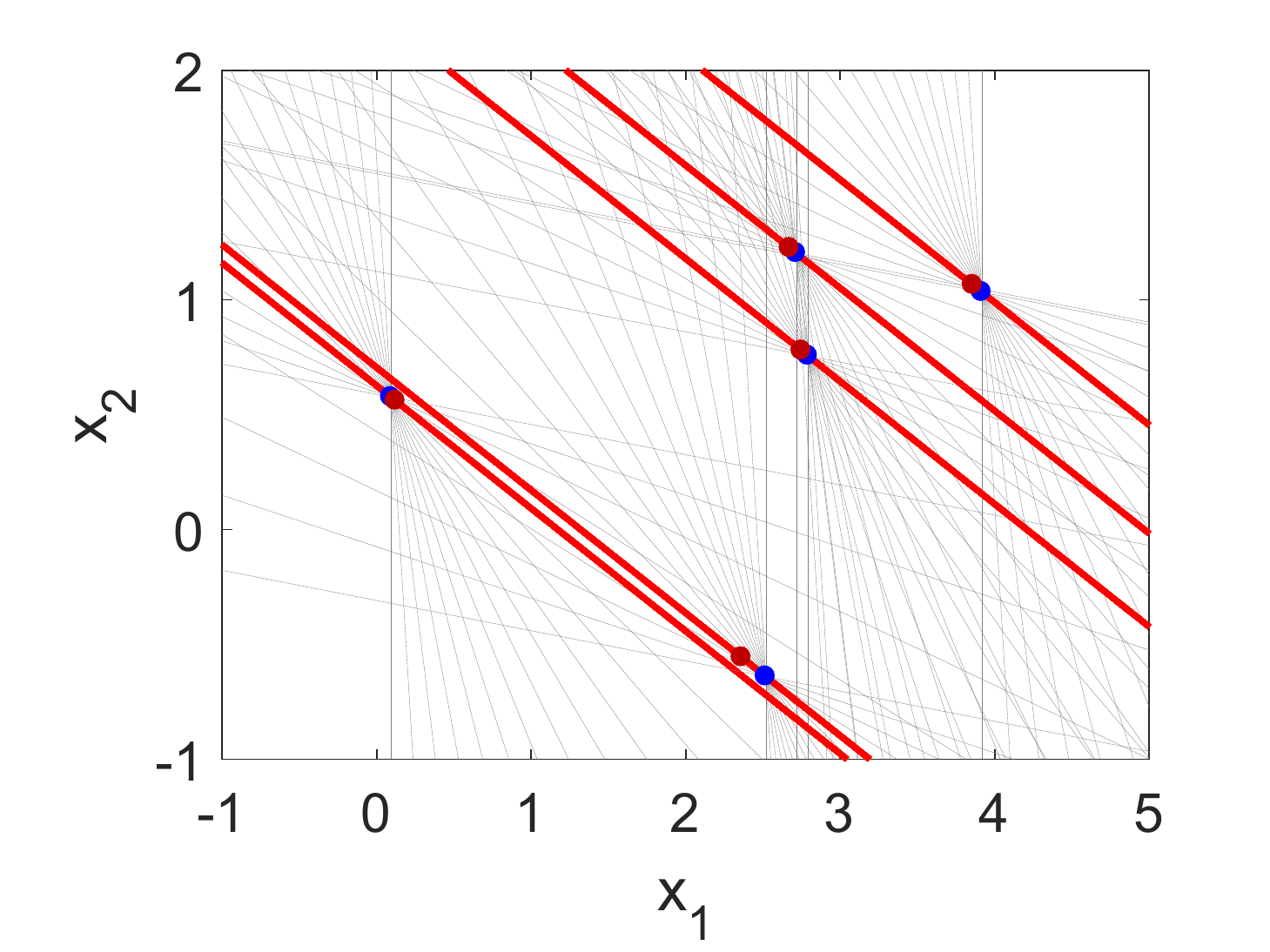}
\vspace{-0.1cm}
	\caption{Illustration of the correction principle for discrete ensembles based on a small-scale example. The time points used for the estimation are chosen so that the corresponding directions are uniformly spread, evident from the illustrated grey ``backprojection lines''.}
	\label{fig:initial_state_estimation_discrete}
\end{figure*}

Through iteratively projecting the estimator's states orthogonally on the designated hyperplane defined by the output measurements, the estimator states eventually converge to the actual states. It is an interesting observation that from a numerical linear algebra point of view, the above described procedure portrays a direct generalization of the (randomized) Kaczmarz method \cite{strohmer2009randomized} to the situation that $N$ linear equations $Ax_i=b_i$, coupled through the fact that the $N$ different right-hand sides are given in a random order that is undisclosed to us, have to be solved, as already discussed in \cite{zeng2016diss}. An interesting open problem in this novel setup for the Kaczmarz method is to find optimal (possibly random) choices of angles that yield the fastest convergence.
Regarding the online estimation scheme, in Figure~\ref{fig:discrete_online_mhe} we illustrate the result of tracking the position of a discrete ensemble of double integrators from noisy measurements using a moving horizon scheme with a horizon length $T_H = 1$.

\begin{figure}[htp!]
\vspace{-0.3cm}
	\centering
  \includegraphics[width=0.395\textwidth]{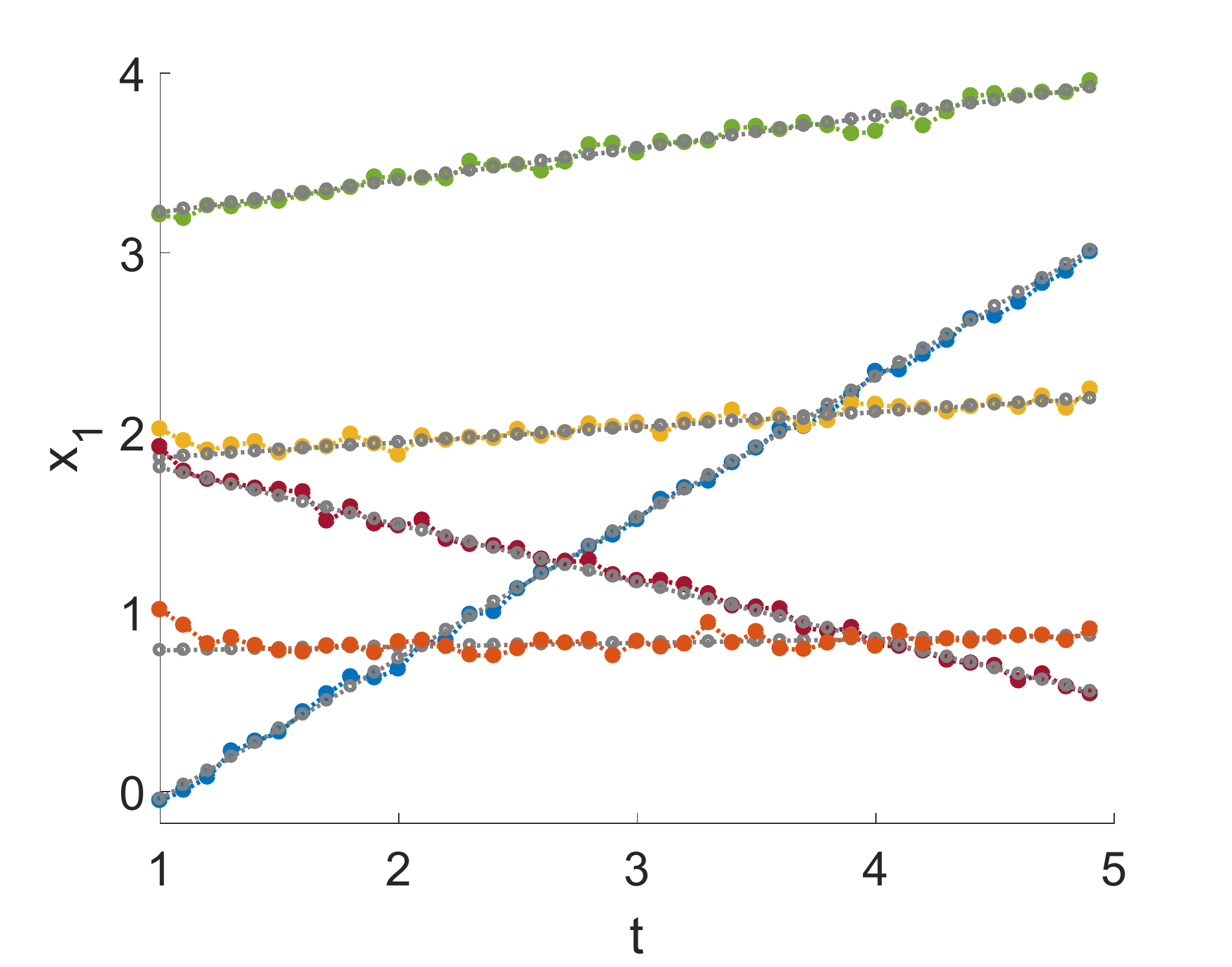}
\vspace{-0.07cm}
	\caption{Reconstruction of the tracts from noisy output measurements $y + n$ where $n \sim N(\mu=0,\sigma^2 = 0.2^2)$ in an online fashion. The grey dotted lines show the actual tracks without noise. The colored tracks show the reconstruction from the observer. In the underlying estimation in the two-dimensional state space, the correction is based on 11 (noisy) recorded measurements in the estimation horizon $[4,5]$, of which only 5 randomly chosen directions are utilized for actually carrying out the Kaczmarz steps.}
	\label{fig:discrete_online_mhe}
\end{figure}
\begin{figure}[htp!]
	\centering
  \includegraphics[width=0.395\textwidth]{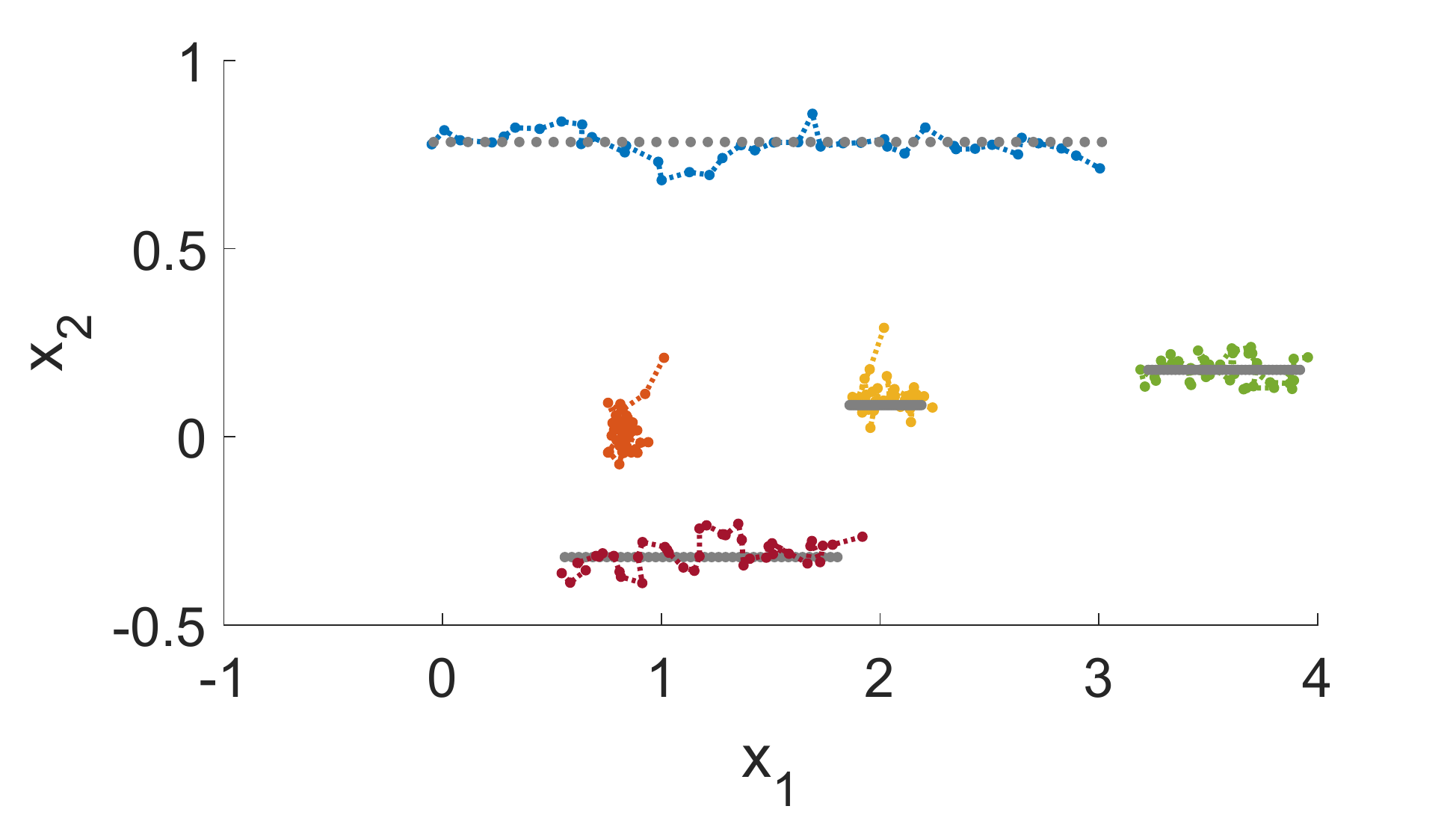}
\vspace{-0.15cm}
	\caption{Illustration of the tracking of the targets in state space. While the estimates for the velocities fluctuate more prominently, the estimates for the position are much more precise due to the fact that position, in contrast to velocity, can be measured directly, and therefore also directly corrected.}
	\label{fig:double_integrator_learning_velocity}
\end{figure}

We note that no ``flipping'' of the position estimates is occurring at the intersection, i.e.\ no sample point that has been tracking one system starts to track another point associated to another system and vice versa. The reason for this is that in the moving horizon approach, by having a time series of measurements, the dynamical component is explicitly taken into account in the estimation. Intuively speaking, by looking at multiple time points, rather than a single time point, and taking our knowledge of the dynamics into account (which happens to be a double integrator), we can also obtain estimates for the velocities, which are used to distinguish the different (anonymized) systems. In fact, it is only by having a horizon of past measurements that corrections in the $x_2$-direction can be achieved in our presented scheme, cf.\ the correction mechanism shown in Figure~\ref{fig:initial_state_estimation_discrete}. The correct ``learning'' of the velocities is illustrated in Figure~\ref{fig:double_integrator_learning_velocity}.

To summarize, the methodology presented in this section for the class of discrete ensembles, which was straightforwardly derived from our sample-based study of the continuous case, provides a significantly improved computational method for the discrete case, which before was handled by a clustered least squares approach in \cite{zeng2017tac}, and was thus limited to problems with about ten agents. With the new approach, it is easily possible to consider problems with hundreds of agents or even more without any difficulties at all. Moreover, if not much emphasis is put on the fact that the tracking takes place on the level of individual systems, i.e.\ one is only interested in a tracking of the population and not a very accurate tracking of individual systems, this method can also be directly applied for large-scale systems, yielding a second simplistic method for problem sizes similar to those in the continuous formulation. A direct simulation example for tracking a population of $N=10^4$ particles is shown in Figure~\ref{fig:tracking_distribution_discrete}.

\begin{figure}[htp!]
\vspace{-0.5cm}
	\centering
	\includegraphics[width=0.45\textwidth]{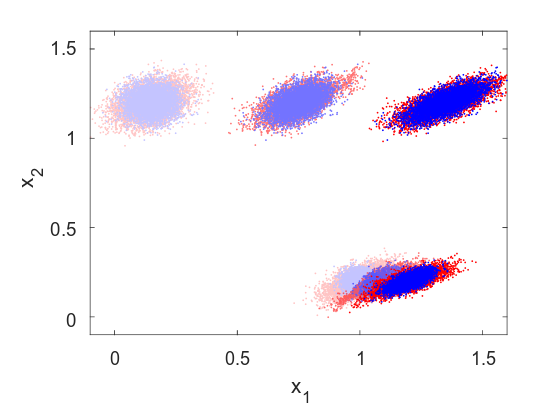}
\vspace{-0.23cm}
	\caption{Three successive predictions at the three time points $t = 0.5, 1.0, 1.5$ using the method for discrete ensembles, but otherwise the same setup as in the earlier example. While one can see that the estimator is able to track the actual ensemble, the convergence rate is slower.
 }
	\label{fig:tracking_distribution_discrete}
\end{figure}

While one can infer that the estimator is able to eventually track the actual ensemble in an acceptable manner, the convergence rate is visibly slower than that in Figure~\ref{fig:MHE_double_integrators}, still only providing a rather coarse estimation in the third estimation step. Furthermore, the computational time is longer compared to the optimal mass transport approach. This is because the method obtained from the study of discrete ensembles involves a sorting of $N$ numbers at each correction step, whereas the optimal mass transport problem formulation does not scale with the actual number of systems $N$, but the number of bins that one chooses. There is, however, a simple remedy in that one can (randomly) choose a subsample of smaller size to speed up the overall estimation process, which would result in a simple to implement, heuristic method for obtaining a quick, first rough estimate.

\section{Conclusions and Outlook}
\label{sec:conclusions}
In the present paper, a first sample-based treatment of the estimation and observation problems associated to the recently emerging class of ensembles of dynamical systems was presented in an introductory manner. The sample-based approach completely circumvents the route over parameterizing the unknown nonparametric probability distribution, which is common to all previous approaches and a crucial aspect, as it all previously considered algorithms to problem setups in which the state-space is low-dimensional.

The starting point for establishing a sample-based approach is the premise of strictly using a set of points in state space as a means to describe / track a distribution rather than to use other approximations such as histograms or more general kernel functions. The main challenge then was to devise an iterative strategy that operates by manipulations on the set of points which would eventually result in the convergence of the set of points to a configuration that could very well be a set of samples from the distribution of interest. From a conceptual point of view, a main result of this paper is the demonstration that optimal mass transport problems, as well as the classical Cram\'{e}r-Wold device, when viewed through the lens of statitics, constitute crucial links in the endeavor to derive \emph{sample-based} population observers.

A key feature of the correction scheme is the interesting two-layer structure that promotes a very basic and simple implementation: The corrective measures for the set of points is computed in a global fashion, based on population-level mismatches, but is eventually implemented on the level of individual particles by feeding population-level data to the individual particles, which compute their own correction by implementing a simple randomized strategy. As a prototype model for the more general scheme portrayed in a two-dimensional state-space, we may consider the system
\begin{align*}
\dot{x}(t) = \begin{pmatrix} \cos(\alpha(t)) \\ \sin(\alpha(t)) \end{pmatrix} (\cos(\alpha(t)) \;\, \sin(\alpha(t)) ) (x_{\text{ref}}(t)-x(t)),
\end{align*}
where again the reference signal of the individual systems $x_{\text{ref}}(t)$ is obtained from population-level considerations and could differ for different systems in the population. Intuitively, this correction scheme can in fact be very naturally thought of along the lines of the process of raking leaves together using rake strokes from several different directions, as we saw in the many illustrations. An interesting open problem in this regard is to derive optimal sequences of angles, possibly formulated in a stochastic framework, that yield a fast convergence for arbitrary configurations of sample points.

It was also shown how the discrete version of the ensemble observability problem can be treated almost as a corollary of the established novel results for the continuous ensemble observer problem. We conclude that presented methodology yields a general and coherent framework for the computational ensemble observability problem.

\newpage
\bibliographystyle{IEEEtran}
\bibliography{references}

\end{document}